\numberwithin{equation}{section}
\begin{document}

\title{The Hybridization of Branch and Bound with Metaheuristics for Nonconvex Multiobjective Optimization}
%\subtitle{Do you have a subtitle?\\ If so, write it here}

\titlerunning{}        % if too long for running head

\author{Wei-tian Wu$^1$ \and Xin-min Yang$^2$ }

%\authorrunning{Modified extragradient methods for solving variational inequality problems} % if too long for running head

\institute{ Wei-tian Wu \at College of Mathematics, Sichuan University, Chengdu 610065, China\\
                    \email{weitianwu@163.com}\\
           \Letter Xin-min Yang \at National Center for Applied Mathematics of Chongqing 401331, China\\
             School of Mathematical Sciences, Chongqing Normal University, Chongqing 401331, China\\
              \email{xmyang@cqnu.edu.cn}}

\date{Received: date / Accepted: date}
% The correct dates will be entered by the editor

\maketitle

\begin{abstract}
A hybrid framework combining the branch and bound method with multiobjective evolutionary algorithms is proposed for nonconvex multiobjective optimization. The hybridization exploits the complementary character of the two optimization strategies. A multiobjective evolutionary algorithm is intended for inducing tight lower and upper bounds during the branch and bound procedure. Tight bounds such as the ones derived in this way can reduce the number of subproblems that have to be solved. The branch and bound method guarantees the global convergence of the framework and improves the search capability of the multiobjective evolutionary algorithm. An implementation of the hybrid framework considering NSGA-II and MOEA/D-DE as multiobjective evolutionary algorithms is presented. Numerical experiments verify the hybrid algorithms benefit from synergy of the branch and bound method and multiobjective evolutionary algorithms.

\keywords{Multiobjective optimization \and global optimization \and nonconvex optimization \and branch and bound algorithm \and evolutionary algorithm}
% \PACS{PACS code1 \and PACS code2 \and more}
\subclass{90C26\and 90C29\and 90C30}
\end{abstract}

\section{Introduction}
Many optimization problems in the fields of science\textsuperscript{\cite{ref1}} and engineering\textsuperscript{\cite{ref2}} need to consider minimizing multiple and conflicting objectives simultaneously. Researchers refer to such problems as multiobjective optimization problems (MOPs). For them, a set of mathematically equally good solutions which are known as Pareto optimal solutions\textsuperscript{\cite{ref3}} can be identified. Generally, multiobjective optimization methods aim at finding one or a set of Pareto optimal solutions (Pareto set for short) depending on the decision maker's preference.

Following a classification by Hwang and Masud\textsuperscript{\cite{ref4}}, multiobjective optimization methods can be classified into three classes according to the intervention of the decision maker in the solution process: the a priori methods, the interactive methods and the a posteriori methods. For the latter methods, an entirely approximation of the Pareto set is generated first and then the decision maker is supposed to select the most preferred one from candidate solutions. The a posteriori methods are usually considered to be computationally expensive; however, with the vast increase in computer performances, the a posteriori methods have shown some significant advantages. On the one hand, having an approximation of the whole Pareto set means that all the trade-offs associated with the problem are presented to the decision maker, which reinforces the decision maker's confidence on the final decision. On the other hand, no re-optimizations and further interaction are needed for the a posteriori methods.

Most multiobjective evolutionary algorithms\textsuperscript{\cite{ref5,ref6,ref7,ref8,ref9,ref10,ref11,ref12}} can be classified as a posteriori. These algorithms generate an approximate of the whole Pareto front by the way of reproducing operations (crossover, mutation and selection) among the candidate solutions. In fact, the reproducing operations are stochastic search strategies inspired by biological evolution, which makes it difficult to discuss the convergence of MOEAs. Nevertheless, many researchers still choose MOEAs to solve practical problems\textsuperscript{\cite{ref13,ref14,ref15}}, because these algorithms show to be very flexible in handling problems with different types of variables and objectives\textsuperscript{\cite{ref6,ref10,ref12}}.

Other a posteriori methods are based on the divide-and-conquer paradigm, for example, \cite{ref16,ref17,ref18,ref19,ref20}.
Among these approaches, the branch and bound method\textsuperscript{\cite{ref18,ref19,ref20,ref21,ref22,ref23}} have been verified to be effective in handling nonconvex MOPs. The solution process of the branch and bound method implicitly searches for all possible solutions to the problem in a tree structure. The nodes in the tree generate children by subdividing corresponding search region into smaller ones (i.e., branching), with the help of upper and lower bounds for small regions (i.e., bounding), discarding tests are used to prune off the small regions that are provably suboptimal (i.e., pruning). Any branch and bound algorithm for MOPs consists of these three components. For instance, Fern{\'a}ndez and T{\'o}th\textsuperscript{\cite{ref18}} design a branch and bound algorithm for bi-objective optimization problems that employs bisection and uses interval analysis as the bounding method, and proposes three discarding tests to enhance the performance of the algorithm. {\v{Z}}ilinskas et al.\textsuperscript{\cite{ref22}} use trisection, Lipschitz lower bounds, and dominance-based discarding test as part of new branch and bound algorithm. In the branch and bound algorithm presented by Niebling and Eichfelder\textsuperscript{\cite{ref20}}, a new discarding test which combines $\alpha$BB method\textsuperscript{\cite{ref24}} with an extension of Benson's outer approximation techniques\textsuperscript{\cite{ref25}} is presented for their branch and bound algorithm.

The challenge of the branch and bound algorithms raises when facing the exponential-size number of nodes. Hillermeier\textsuperscript{\cite{ref26}} proves that the Pareto set of a continuous MOP is a low-dimensional manifold in the variable space, so that the majority of nodes at the same depth of the search tree are of equal importance, which leads to one branch and bound algorithm increasing the computation time when the discarding test is not able to prevent all nodes from exploration. It is important to note that the performance of the discarding test can be improved by means of obtaining tight bounds. On the other hand, if the tight bounds are not available, the branch and bound algorithm can still be parallelized to reduce computation time, not only because the Pareto solutions may be distributed in any nodes at the same depth level but also because nodes at the same depth level denote disjunct subproblems.

In this paper, we present a parallel algorithmic framework which hybridizes the branch and bound method with an MOEA (PBB-MOEA for short).
The aim of the MOEA is to improve the solution quality including searching for tight upper bounds and inducing tight lower bounds during branch and bound procedure, and further the MOEA is favor of obtaining feasible upper bounds when handling MOPs with nonconvex constraints. Tight bounds derived in this way can reduce the number of subproblems that have to be solved. The branch and bound method not only guarantees the global convergence of the hybrid framework but also improves the search capability of the MOEA, which enables the resulting hybrid algorithm to handle some problems where pure MOEAs may fail. Furthermore, a technology called elitism is integrated in PBB-MOEA in order to reduce computational cost.

This paper is organized as follows. We begin in Section 2 by recalling some basics of multiobjective optimization. Section 3 focuses on new ideas for PBB-MOEA including the use of an MOEA, the improvement of lower bounds and the implementation of the elitism. We devote Section 4 to some theoretical results. In Section 5 PBB-MOEA is compared to MOEAs and a branch and bound algorithm in order to indicates its performance.
%\section{Concepts and notation}
\section{Basics of Multiobjective Optimization}
In this paper we consider the following constrained multiobjective optimization problems:
\begin{align*}
 ({\rm MOP}) \qquad\min\limits_{x\in\Omega}\quad F(x)=(f_1(x),\ldots ,f_m(x))^T
\end{align*}
with
\begin{align*}
\Omega=\{x\in\mathbb{R}^n:g_j(x)\geq0,\;j=1,\ldots,p,\;x_i^{(L)}\leq x_i\leq x_i^{(U)},\;i=1,\ldots,n\}
\end{align*}
where $f_i:\mathbb{R}^n\rightarrow \mathbb{R}$ ($i=1,\ldots,m$) are Lipschitz continuous, and $g_j:\mathbb{R}^n\rightarrow \mathbb{R}$ ($j=1,\ldots,p$) are continuous. For a feasible point $x\in\Omega$, the objective vector $F(x)$ is said to be the image of $x$, while $x$ is called the preimage of $F(x)$.

The concept of Pareto optimality for (MOP) is based on the partial ordering of $\mathbb{R}^m$ defined for $z^1, z^2\in \mathbb{R}^m$ by
\begin{align*}
z^1\preccurlyeq z^2&\;\;(z^1\;weakly\;dominates\;z^2)\Longleftrightarrow\;z^1_i\leq z^2_i,\;i\in\{1,\ldots,m\},\\
z^1\preceq z^2&\qquad(z^1 \;dominates\;z^2)\qquad\Longleftrightarrow\;z^1\preccurlyeq z^2\;\wedge\;z^1\neq z^2,\\
z^1\prec z^2&\;\;(z^1\;strictly\;dominates\;z^2)\Longleftrightarrow\;z^1_i< z^2_i,\;i\in\{1,\ldots,m\}.
\end{align*}
When $z^1\npreceq z^2,z^2\npreceq z^1$ and $z^1\neq z^2$, we say that $z^1$ is \emph{indifferent} to $z^2$ ($z^1\sim z^2$). A set $S\subseteq\mathbb{R}^m$ is called \emph{a nondominated set} if for any $z^1,z^2\in S$, $z^1\npreceq z^2$.

A point $x^*\in \Omega$ is said to be \emph{Pareto optimal solution} of (MOP) if there is no point $x\in\Omega$ such that $F(x)\preceq F(x^*)$. The set of all Pareto optimal solutions is called the \emph{Pareto optimal set} and is denoted by $X^*$ or \emph{PS}. The image under the map $F$ of $X^*$ is known as the \emph{Pareto front} of (\rm MOP) and is denoted by \emph{PF}.

The ideal point $z^*\in \mathbb{R}^m$ is defined as the objective vector whose components are the global minimum of each objective function on $\Omega$
\begin{equation*}
z^*={\big (}\min\limits_{x\in\Omega}f_1(x),\min\limits_{x\in\Omega}f_2(x),\ldots,\min\limits_{x\in\Omega}f_m(x){\big )}^T.
\end{equation*}

A nonempty set $B\subseteq\Omega$ is called a \emph{subregion} if $B=\{x\in\mathbb{R}^n:\underline{x}_i<x_i<\overline{x}_i,\;i=1,\ldots,n\}$, and its midpoint is defined as $c(B):=(\overline{x}+\underline{x})/2$. The Euclidean norm is denoted by $\|\cdot\|_2$, $l_1$-norm by $\|\cdot\|_1$ and $l_\infty$-norm by $\|\cdot\|_\infty$. We let $\omega(B)=\overline{x}-\underline{x}$,
then width of $B$ with respect to $l_{(\cdot)}$-norm is defined as $\omega_{(\cdot)}:=\|\omega(B)\|_{(\cdot)}$.

We use $d(a,c)=\|a-c\|_2$ to quantify the distance between two points $a$ and $c$, and the distance between the point $a$ and a nonempty finite set $C$ is defined as $d(a,C):=\min_{c\in C}d(a,c).$ Let $A$ be another nonempty finite set, we define the Hausdorff distance between $A$ and $C$ by
\begin{align*}
d_H(A,C):=\max\{d_h(A,C),d_h(C,A)\},
\end{align*}
where $d_h(A,C)$ is the directed Hausdorff distance from $A$ to $C$ as defined by $d_h(A,C):=\max_{a\in A}d(a,C).$
In addition, the magnitude of a set is denoted by $|\cdot|$.

\section{A Hybrid Framework for MOPs}

In this section, we present the hybrid framework which consists of the branch and bound method and an MOEA. For a simple discussion we introduce the branch and bound algorithm for box-constrained problem first, i.e., we allow $j=0$ in (\rm MOP), this leads to the following problem
\begin{equation*}
 ({\rm P}) \qquad\min\limits_{x\in [a,b]}\quad F(x)=(f_1(x),\ldots ,f_m(x))^T.
\end{equation*}
Algorithm 1 gives an implementation of the basic branch and bound algorithm for (P) (also see \cite{ref18}).

Algorithm 1 can output a covering of the Pareto optimal set and an approximation of the entire Pareto front of (P). At the $k$-th iteration, a subregion $B$ is selected from the subregion collection $\mathcal{B}_k$ and then is bisected into two new subregions $B_1$ and $B_2$ along the $j$-th coordinate, where
$j=\min\big\{\mathop{\arg\max}_{i=1,\ldots,n}\{w_i\}\big\}.$ Algorithm 1 generates a nondominated set $\mathcal{U}^{nds}$ to store upper bounds. For the subregion $B_1$, we check if its upper bound $u(B_1)$ is dominated by any other upper bound in $\mathcal{U}^{nds}$. If so $u(B_1)$ will not be stored into $\mathcal{U}^{nds}$; otherwise, $u(B_1)$ will be add to $\mathcal{U}^{nds}$ and all upper bounds dominated by $u(B_1)$ will be removed. A discarding test is used to check whether a subregion contains any Pareto solution. A common type of discarding test is based on the Pareto dominance relation: \emph{if there exists an upper bound $u\in \mathcal{U}^{nds}$ such that $u$ dominates $l(B_1)$, then $B_1$ does not contain any Pareto solution.} In this case $B_1$ will be stored in $\mathcal{B}_{k+1}$.

In what follows, the set $X^*_B$ denotes the Pareto set of (P) with respect to $\Omega\cap B$. The collection of the Pareto sets of (P) with respect to $\mathcal{B}_k$ is denoted by $X^*_k:=\bigcup_{B\in\mathcal{B}_k}X^*_B$.
The lower and upper bounds for $F(X^*_B)$ will be denoted by $l(B)$ and $u(B)$, respectively, and two bounds are also said to be the bounds with respect to $B$. The lower bound set with respect to $\mathcal{B}_k$ is defined as $\mathcal{L}_k:=\bigcup_{B\in\mathcal{B}_k}l(B)$, while the upper bound set $\mathcal{U}_k$ with respect to $\mathcal{B}_k$ is defined similarly.

\begin{algorithm}
  \SetNoFillComment
  \SetKwInOut{Input}{Input}\SetKwInOut{Output}{Output}
  \Input{(P), termination criterion;}
  \Output{$\mathcal{B}_{k}$, $\mathcal{U}^{nds}$;}
  \BlankLine
  $\mathcal{B}_0\leftarrow [a,b]$, $\mathcal{U}^{nds}\leftarrow \emptyset$, $k=0$\;
  \While{termination criterion is not satisfied}
  {$\mathcal{B}_{k+1}\leftarrow \emptyset$\;
  \While{$\mathcal{B}_k\neq\emptyset$}{
  Select a subregion $B$ in $\mathcal{B}_k$ and remove it from $\mathcal{B}_k$ and
  divide $B$ into $B_1$ and $B_2$\;
  \For{$i=1,2$}
  {Calculate the lower bound $l(B_i)$ and upper bound $u(B_i)$ for $B_i$\;
   \If{$B_i$ can not be discarded}{
        Update $\mathcal{U}^{nds}$ by $u(B_i)$ and store $B_i$ into $\mathcal{B}_{k+1}$\;}}
  }
  $k\leftarrow k+1$.}
  \caption{Basic branch and bound algorithm for (P)}
\end{algorithm}

In what follows, the set $X^*_B$ denotes the local Pareto set %with respect to $\Omega\cap B$
of (P) with respect to $\Omega\cap B$. The collection of the local Pareto sets of (P) with respect to $\mathcal{B}_k$ is denoted by $X^*_k:=\bigcup_{B\in\mathcal{B}_k}X^*_B$.
The lower and upper bounds for $F(X^*_B)$ will be denoted by $l(B)$ and $u(B)$, respectively, and two bounds are also said to be the bounds with respect to $B$. The lower bound set with respect to $\mathcal{B}_k$ is defined as $\mathcal{L}_k:=\bigcup_{B\in\mathcal{B}_k}l(B)$, while the upper bound set $\mathcal{U}_k$ with respect to $\mathcal{B}_k$ is defined similarly.

%In next subsections we focus on new ideas for BB-MOEA. For each subregion, we use a MOEA to find upper bounds which will induce an approach to improve a given lower bound. According to the upper bounds found by the MOEA, the concept of elitism can be use to reduce computational cost. A simple parallel version of BB-MOEA called PBB-MOEA is proposed to shorten running time.

\subsection{Improved upper bounds by MOEA}
In single-objective optimization, a global minimal value of objective function over feasible region is restricted from above by the smallest upper bound searched with branch and bound algorithms. Obviously the notion of the smallest upper bound does not make sense to represent an entire Pareto front of a multiobjective optimization problem. Therefore, in the course of the algorithms for multiobjective case, a provisional nondominated set is used to store and update upper bounds found so far. The source of upper bounds may be any points in $\Omega$, more often, the images of the midpoints\textsuperscript{\cite{ref18}} or vertexes\textsuperscript{\cite{ref23}} of subregions can be treated as upper bounds.

In PBB-MOEA, an MOEA is used to calculate upper bounds for each subproblem. And the upper bounds will be stored into the provisional nondominated set $\mathcal{U}^{nds}$. Considering the computational costs of the MOEA, we prefer a mini MOEA to its full version. The ``mini" implies a small initial population and a few generation.

Distinguish from other bounding methods, the advantages of the mini MOEA are as follows: (i) it generates a large quantity of upper bounds to approximate the Pareto front instead of only one for each subproblem, which is beneficial to the discarding test to reduce the number of subregions; (ii) its global search capability is in favor of finding tight upper bounds; (iii) it is able to obtain feasible upper bounds when solving nonconvex constrained MOPs (see Subsection 3.5). The comparative experiments in Section 5 will validate these advantages.

\subsection{Improved lower bounds}
A class of lower bounds based on Lipschitz constant for single-objective optimization problems was proposed by Jones et al. in \cite{ref27}. As they mentioned, the lower bound for the Lipschitz continuous objective function $f:\mathbb{R}^n\rightarrow\mathbb{R}$ with Lipschitz constant $L$ on subregion $B$ can be calculated by
\begin{align}
l(B)=f(c(B))-\frac{L}{2}\omega_{(2)}.\label{E:3.1}
\end{align}

Furthermore, Paulavi{\v{c}}ius and {\v{Z}}ilinskas\textsuperscript{\cite{ref28}} discussed the influence of different norms and corresponding Lipschitz constants on the performance of branch and bound algorithms. As they suggested, a combination of two extreme norms could replace the Euclidean norm in \eqref{E:3.1} in order to enhance the performance, which is as follows
\begin{align}
l(B)=f(c(B))-\frac{1}{2}\min\{L_1\omega_{(\infty)},L_\infty\omega_{(1)}\},\label{E:3.2}
\end{align}
where $L_1=\sup\big\{\|\nabla f(x)\|_1:x\in B\big\}$ and $L_\infty=\sup\big\{\|\nabla f(x)\|_\infty:x\in B\big\}$ are Lipschitz constants. In this paper, Lipschitz constants are estimated by the \emph{natural interval extension}\textsuperscript{\cite{ref27}} of $\nabla f(x)$ on $B$.

%For the lower bound of a vector-valued function $F:\mathbb{R}^n\mapsto\mathbb{R}^m$, we recall the approach proposed so far in the literature.

As the approach proposed in \cite{ref18,ref20,ref23}, a lower bound for the vector-valued function $F(x)=(f_1(x),\ldots,f_m(x))^T$ on $B$ is the vector whose component is the lower bound for each $f_j$ $(i=1,\ldots,m)$. Therefore, by \eqref{E:3.2}, we choose the point
\begin{align}
l(B)=(l_1,\ldots,l_m)^T\quad with \quad l_i=f_i(c(B))-\frac{1}{2}\min\{L_{i,1}\omega_{(\infty)},L_{i,\infty}\omega_{(1)}\},i=1,\ldots,m\label{E:3.3}
\end{align}
as the lower bound for $F$ on $B$, where $L_{i,1}$ and $L_{i,\infty}$ are Lipschitz constants of $f_i$ corresponding to $l_1$-norm and $l_\infty$-norm, respectively.

For the discarding test, tight lower bounds are favor of reducing the number of subproblems that must be solved. Before discussing the approach to improve the lower bound \eqref{E:3.3}, we introduce a classification of dominating points considered by Skulimowski \textsuperscript{\cite{ref30}}.

\begin{definition}\textsuperscript{\cite{ref30}}\label{de:3.1}
Let $y^*\in\mathbb{R}^m$ be an objective vector.
\begin{enumerate}[\rm (i)]
  \item $y^*$ is called a totally dominating point with respect to $F(X^*_B)$ if $F(X^*_B)\subseteq y^*+\mathbb{R}^m_+$. The set of totally dominating points will be denoted by $\mathcal{TD}(B)$;
  \item $y^*$ is called a partly dominating point with respect to $F(X^*_B)$ if $(y^*+\mathbb{R}^m_+)\cap F(X^*_B)\neq\emptyset$. The set of partly dominating points will be denoted by $\mathcal{PD}(B)$.
\end{enumerate}
\end{definition}

Based on Definition \ref{de:3.1}, now we can introduce two kinds of lower bounds for Pareto front.

\begin{definition}\label{de:3.2}
A nonempty set $\mathcal{L}(B)$ is called an overall lower bound set for $F(X^*_B)$ if for every $x\in X^*_B$ there exists a point $l\in\mathcal{L}(B)$ such that $l \preccurlyeq F(x)$.
\end{definition}

\begin{remark}\label{re:3.3}
Observe that a totally dominating point must be an overall lower bound set, but the converse is not true. This is because an overall lower bound set may consist of multiple partly dominating points. However, an overall lower bound set is a totally dominating point when it is a singleton. Therefore, the lower bound \eqref{E:3.3} is not only a totally dominating point with respect to $F(X^*_B)$ but also an overall lower bound set for $F(X^*_B)$.
\end{remark}
Figure 1(a) illustrates the discuss in Remark \ref{re:3.3} with an example of a bi-objective problem. In what follows, a singleton is still called a totally dominating point with respect to $F(X^*_B)$ and denoted by $l(B)$ if it satisfies Definition \ref{de:3.2}.

By the definition of partly dominating point, we can present a new class of lower bounds, called partial lower bound.
\begin{definition}\label{de:3.4}
Let $z^*$ be the ideal point of $F(X^*_B)$. An objective vector $l\in\mathbb{R}^m$ is called a partial lower bound for $F(X^*_B)$ if $l\in \mathcal{PD}(B)\cap (z^*+\mathbb{R}_+^m\backslash{\{0\}})$. The set of partial lower bounds will be denoted by $\mathcal{P}(B)$.
\end{definition}

The next definition gives an equivalent characterization.
\begin{lemma}\label{le:3.5}
Let $z^*$ be the ideal point of $F(X^*_B)$. An objective vector $l\in\mathbb{R}^m$ is called a partial lower bound for $F(X^*_B)$ if
\begin{enumerate}[\rm (i)]
  \item there exists a point $x\in X^*_B$, such that $l\preccurlyeq F(x)$;
  \item there exists a point $x\in X^*_B$, such that $l\sim F(x)$;
  \item $z^*\preceq l$.
\end{enumerate}
\end{lemma}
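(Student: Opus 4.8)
\noindent\emph{Proof proposal.}
The plan is to prove that conditions (i)--(iii) together are equivalent to the defining membership $l\in\mathcal{PD}(B)\cap\bigl(z^*+\mathbb{R}_+^m\backslash\{0\}\bigr)$ of Definition~\ref{de:3.4}. The first step is pure bookkeeping: unpack the two set memberships. By definition $l\in\mathcal{PD}(B)$ means $(l+\mathbb{R}_+^m)\cap F(X^*_B)\neq\emptyset$, i.e.\ there is $x\in X^*_B$ with $F(x)-l\in\mathbb{R}_+^m$, which is exactly condition~(i); and, reading the second set as $z^*+(\mathbb{R}_+^m\backslash\{0\})$, the membership $l\in z^*+\mathbb{R}_+^m\backslash\{0\}$ says $l-z^*\in\mathbb{R}_+^m$ with $l\neq z^*$, i.e.\ $z^*\preccurlyeq l$ and $z^*\neq l$, which is condition~(iii). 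Hence Definition~\ref{de:3.4} amounts to ``(i) and (iii)''; in particular the implication ``(i), (ii), (iii) $\Rightarrow$ $l$ is a partial lower bound'' is immediate, and all the content of the lemma is concentrated in the reverse implication, which reduces to ``(i) and (iii) $\Rightarrow$ (ii)''.

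For this implication I would argue as follows. By~(iii) there is a coordinate $j_0$ with $z^*_{j_0}<l_{j_0}$. Since $z^*$ is the ideal point of $F(X^*_B)$, we have $z^*_{j_0}=\inf\{f_{j_0}(x):x\in X^*_B\}<l_{j_0}$, so the definition of infimum yields a point $\hat x\in X^*_B$ with $f_{j_0}(\hat x)<l_{j_0}$. Then $l$ cannot weakly dominate $F(\hat x)$ (because $l_{j_0}>f_{j_0}(\hat x)$) and $l\neq F(\hat x)$, so in order to conclude $l\sim F(\hat x)$, i.e.\ (ii), it remains only to check $F(\hat x)\npreceq l$. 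Suppose instead that $F(\hat x)\preceq l$, and let $x_1\in X^*_B$ be the point supplied by~(i), so that $l\preccurlyeq F(x_1)$. Then $F(\hat x)\preccurlyeq l\preccurlyeq F(x_1)$. If $F(\hat x)=F(x_1)$, this chain forces $F(\hat x)=l$, contradicting $f_{j_0}(\hat x)<l_{j_0}$; otherwise $F(\hat x)\preceq F(x_1)$, i.e.\ $\hat x$ dominates $x_1$, contradicting $x_1\in X^*_B$. Either way we reach a contradiction, so $F(\hat x)\npreceq l$ and (ii) follows.

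I expect the main difficulty to lie not in the architecture above but in two technical points that must be handled carefully. The first is the passage from $z^*_{j_0}<l_{j_0}$ to the existence of $\hat x\in X^*_B$ with $f_{j_0}(\hat x)<l_{j_0}$: this follows from the definition of infimum alone, but should be stated so that one does not accidentally invoke attainment of $z^*$ (which may fail when the subregion is open). The second, and more delicate, is the step ``$\hat x$ dominates $x_1$ contradicts $x_1\in X^*_B$'', which uses that any two distinct points of $X^*_B$ are mutually nondominated. This is automatic when $X^*_B$ is the global Pareto set of (P) on $\Omega\cap B$; if $X^*_B$ is understood as a set of merely locally Pareto optimal points, this reasoning must be reinforced, either by restricting to the global Pareto set or by adopting mutual nondomination as a standing assumption. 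I would also list the degenerate sub-cases ($\hat x=x_1$, or $F(\hat x)=F(x_1)$) explicitly so that the case distinction in the second paragraph is exhaustive.
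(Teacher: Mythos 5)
Your proposal is correct, and it in fact supplies more than the paper does: the paper's entire proof of Lemma~\ref{le:3.5} is ``The proof is obvious and so is omitted,'' so there is no argument to compare against. You correctly identify the only non-tautological content: unpacking Definition~\ref{de:3.4} gives exactly (i) and (iii) (membership in $\mathcal{PD}(B)$ is (i), membership in $z^*+\mathbb{R}_+^m\backslash\{0\}$ is (iii)), so the lemma's claim of an ``equivalent characterization'' reduces to showing that (i) and (iii) force (ii). Your argument for that step is sound: the coordinate $j_0$ with $z^*_{j_0}<l_{j_0}$, the infimum argument producing $\hat x$ with $f_{j_0}(\hat x)<l_{j_0}$ (correctly avoiding any attainment assumption), and the chain $F(\hat x)\preccurlyeq l\preccurlyeq F(x_1)$ contradicting Pareto optimality of $x_1$ all check out. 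This is essentially the content the paper records informally in Remark~\ref{re:3.6}. Your closing caveat is also apt: the argument needs mutual nondomination within $X^*_B$, which holds under the paper's primary definition of $X^*_B$ as the Pareto set of (P) with respect to $\Omega\cap B$ (the stray reference to a ``local Pareto set'' in the duplicated paragraph would indeed require the reinforcement you describe).
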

\begin{proof}
The proof is obvious and so is omitted. \qed
\end{proof}
\begin{remark}\label{re:3.6}
According to Definitions \ref{de:3.4}, it is easy to see that a partial lower bound is a partly dominating point, thus there exists a subset $\bar{X}\subseteq X^*_B$ such that $l\sim F(\bar{x})$ for every $\bar{x}\in\bar{X}$ and $l\preccurlyeq F(x)$ for every $x\in X^*_B\backslash\bar{X}$. Moreover, we have the following property
\begin{align}
    \mathcal{P}(B)=(F(X^*_B)-\mathbb{R}_+^m)\cap{(z^*+\mathbb{R}_+^m\backslash{\{0\}})}.\label{E:3.4}
\end{align}
\end{remark}
By Remark \ref{re:3.3} and Figure 1(a), we know that an overall lower bound set can consist of multiple partly dominating points, which may be closer to the Pareto front than a totally dominating point. This inspires us whether a totally dominating point can be improved to such an overall lower bound set.

\begin{lemma}\label{le:3.7}
Let a totally dominating point $l(B)=(l_1,\ldots,l_m)^T$ with respect to $F(X^*_B)$ be given. If $\hat{l}(B)=(\hat{l}_{1},\dots,\hat{l}_{m})^T$ is a partial lower bound for $F(X^*_B)$, then the set $$\mathcal{L}(B)=\big\{l^{(i)}:l^{(i)}=(l_1,\ldots,\hat{l}_{i},\ldots,l_m)^T,\;i=1,\ldots,m\big\}$$
is an overall lower bound set for $F(X^*_B)$.
\end{lemma}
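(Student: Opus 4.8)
The plan is to verify Definition~\ref{de:3.2} for $\mathcal{L}(B)$ directly. Fix an arbitrary $x\in X^*_B$; I must exhibit an index $i\in\{1,\ldots,m\}$ with $l^{(i)}\preccurlyeq F(x)$. Since $l(B)=(l_1,\ldots,l_m)^T$ is a totally dominating point with respect to $F(X^*_B)$, Definition~\ref{de:3.1}(i) gives $F(X^*_B)\subseteq l(B)+\mathbb{R}^m_+$, so $l_k\le f_k(x)$ for every $k$. Hence, for any index $i$, all coordinates of $l^{(i)}=(l_1,\ldots,\hat{l}_i,\ldots,l_m)^T$ other than the $i$-th are already dominated coordinatewise by $F(x)$, and $l^{(i)}\preccurlyeq F(x)$ holds as soon as $\hat{l}_i\le f_i(x)$. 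Thus the lemma reduces to the claim: for every $x\in X^*_B$ there is some coordinate $i$ with $\hat{l}_i\le f_i(x)$.

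I would prove this claim by contradiction. Assume that for some $x\in X^*_B$ one has $f_i(x)<\hat{l}_i$ for all $i$, i.e. $F(x)\prec\hat{l}(B)$. Because $\hat{l}(B)$ is a partial lower bound for $F(X^*_B)$, it is in particular a partly dominating point (Definition~\ref{de:3.4}, cf. Lemma~\ref{le:3.5}(i)), so $(\hat{l}(B)+\mathbb{R}^m_+)\cap F(X^*_B)\neq\emptyset$; pick $x'\in X^*_B$ with $\hat{l}(B)\preccurlyeq F(x')$, i.e. $\hat{l}_i\le f_i(x')$ for all $i$. Combining, $f_i(x)<\hat{l}_i\le f_i(x')$ for every $i$, hence $F(x)\prec F(x')$ and in particular $F(x)\preceq F(x')$ with $x,x'\in X^*_B$. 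This contradicts the definition of $X^*_B$ as the Pareto set of (P) with respect to $\Omega\cap B$, since $x'$ would be Pareto optimal while $x\in\Omega\cap B$ dominates it. Therefore every $x\in X^*_B$ admits an index $i$ with $\hat{l}_i\le f_i(x)$; for that $i$ we get $l^{(i)}\in\mathcal{L}(B)$ with $l^{(i)}\preccurlyeq F(x)$, and since $\mathcal{L}(B)$ is nonempty it is an overall lower bound set for $F(X^*_B)$.

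Everything but the contradiction step is bookkeeping with the componentwise orders $\preccurlyeq$, $\preceq$, $\prec$. The one point carrying real content — and the place I would be most careful in writing up — is the passage from ``$\hat{l}(B)$ is a partly dominating point'' to the existence of a witness $x'\in X^*_B$ with $\hat{l}(B)\preccurlyeq F(x')$, after which the non-domination property of the Pareto set $X^*_B$ closes the argument; this is exactly the hypothesis on $X^*_B$ that must be invoked explicitly. It is also worth remarking that only part~(i) of Lemma~\ref{le:3.5} (equivalently, membership of $\hat{l}(B)$ in $\mathcal{PD}(B)$) is used, while the ideal-point condition $z^*\preceq\hat{l}(B)$ in Definition~\ref{de:3.4} is not needed for this particular conclusion.
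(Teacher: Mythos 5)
Your proof is correct, but it follows a different route from the paper's. The paper first invokes Remark \ref{re:3.6} to split $X^*_B$ into the points whose images are indifferent to $\hat{l}(B)$ and those weakly dominated by it; in the indifferent case it introduces the index set $\mathcal{I}=\{j:\hat{l}_j<f_j(\bar{x})\}$, builds the auxiliary vector $\bar{l}$ mixing coordinates of $l(B)$ and $\hat{l}(B)$, and shows $l^{(i)}\preccurlyeq\bar{l}\preceq F(\bar{x})$ for $i\in\mathcal{I}$; in the other case it uses $l^{(i)}\preceq\hat{l}(B)\preccurlyeq F(x)$, which implicitly relies on $l(B)\preccurlyeq z^*\preceq\hat{l}(B)$, i.e.\ on the ideal-point part of Definition \ref{de:3.4}. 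You instead reduce the whole statement to the one-coordinate claim that every $x\in X^*_B$ admits some $i$ with $\hat{l}_i\le f_i(x)$, and prove it by contradiction: $F(x)\prec\hat{l}(B)\preccurlyeq F(x')$ for a witness $x'\in X^*_B$ of the partly-dominating property would make $x$ dominate $x'$ inside $\Omega\cap B$, contradicting Pareto optimality of $x'$ there. This is valid (both $x$ and $x'$ lie in $\Omega\cap B$, so the nondominatedness of $F(X^*_B)$ applies), and it is essentially a self-contained re-derivation of the dichotomy that the paper delegates to Remark \ref{re:3.6}. What your version buys: it is shorter, constructive bookkeeping aside, and, as you note, it uses only membership of $\hat{l}(B)$ in $\mathcal{PD}(B)$ and not the condition $z^*\preceq\hat{l}(B)$, so it establishes the conclusion under a weaker hypothesis than the paper's argument appears to use. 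What the paper's version buys: by identifying the index set $\mathcal{I}$ it pinpoints \emph{which} members $l^{(i)}$ of $\mathcal{L}(B)$ do the dominating for each Pareto point, which is closer in spirit to how Theorem \ref{th:3.9} is later used to filter out the degenerate exchanges with $\hat{l}_i=l_i$.
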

\begin{proof}
By Remark \ref{re:3.6}, we know that there exists a set $\bar{X}\subseteq X^*_B$ such that $\hat{l}(B)\sim F(\bar{x})$ for every $\bar{x}\in \bar{X}$ and $\hat{l}(B)\preccurlyeq F(x)$ for every $\bar{x}\in X^*_B\backslash\bar{X}$.

Assume for every $\bar{x}\in \bar{X}$ there exists an index set $\mathcal{I}\subseteq \{1,2,\ldots,m\}$ such that $\hat{l}_{j}<f_j(\bar{x})$ for every $j\in\mathcal{I}$ and $f_k(\bar{x})\leq \hat{l}_{k}$ for every $k\in \{1,2,\ldots,m\}\backslash \mathcal{I}$. Note that $l(B)$ is a totally dominating point with respect to $F(X^*_B)$, it follows that $l_k\leq f_k(\bar{x})\leq \hat{l}_{k}$ for each $k\in \{1,2,\ldots,m\}\backslash \mathcal{I}$. In this way, %for every $\bar{x}\in \bar{X}$
we can construct an objective vector $\bar{l}=(\bar{l}_1,\ldots,\bar{l}_m)^T$ such that $\bar{l}\preceq F(\bar{x})$, where
$$\bar{l}_i=\left\{
          \begin{array}{ll}
              \hat{l}_{i}, & i\in \mathcal{I};\\
              l_i, & i\in\{1,2,\ldots,m\}\backslash \mathcal{I},
          \end{array}\right.
          \label{pan}$$
and further it is easy to see that for all $i\in\mathcal{I}$ such that $l^{(i)}\preccurlyeq \bar{l}\preceq F(\bar{x})$ where $l^{(i)}\in\mathcal{L}(B)$.

On the other hand, we have $\hat{l}(B)\preccurlyeq F(x)$ for each $x\in X^*_B\backslash \bar{X}$. Furthermore, it is easy to see that $l^{(i)}\preceq \hat{l}(B)$ for each $i\in\{1,\ldots,m\}$. Due to the transitivity of Pareto domination relation, we have $l^{(i)}\preceq F(x)$ for each $i\in\{1,\ldots,m\}$ and $x\in X^*_B\backslash \bar{X}$. \qed
\end{proof}

In Lemma \ref{le:3.7}, an overall lower bound set can be obtained by successively exchanging each component of a totally dominating point and a partial lower bound. However, this is not an efficient way to improve a totally dominating point, which can be illustrated with the following example.

\begin{example}
Consider the following multiobjective optimization problem:
$$F(x)=
\begin{pmatrix}
x_1x_2\\
x_1(1-x_2)\\
1-x_1
\end{pmatrix}\quad with\quad x_i\in[0,1],\;i=1,2.$$
It is easy to see that the Pareto front of this problem is contained in the hyperplane $\{(f_1,f_2,f_3)^T:f_1+f_2+f_3=1, f_i\geq 0,i=1,2,3\}$, and its ideal point $z^*$ is $(0,0,0)^T$. Furthermore, according to the Lemma \ref{le:3.5}, we know that the objective vector $l=(0,0.2,0.2)^T$ is a partial lower bound. Based on Lemma \ref{le:3.7}, we can convert $z^*$ into an overall lower bound set
\begin{align*}
\mathcal{L}=\{(0,0,0)^T,(0,0.2,0)^T,(0,0,0.2)^T\}.
\end{align*}
However, it is clear that $\mathcal{L}$ is not an improved lower bound set because $z^*\in\mathcal{L}$.
\end{example}

The above example shows that if the totally dominating point and the partial lower bound have the same value at one coordinate then the totally dominating point will be an element of the overall lower bound set. This means that Lemma \ref{le:3.7} can not improve the totally dominating point in this situation. To avoid the invalid exchange operation, we should check the values of the totally dominating point and the partial lower bound at each coordinate. As a result of the above discussion, we have the following theorem.
\begin{theorem}\label{th:3.9}
Let a totally dominating point $l(B)=(l_1,\ldots,l_m)^T$ and a partial lower bound $\hat{l}(B)=(\hat{l}_1,\ldots,\hat{l}_m)^T$ for $F(X^*_B)$ be given. If $|\tau(\hat{l}(B),l(B))|=0$, then $l(B)$ can be improved to an overall lower bound set
  \begin{align*}
  \mathcal{L}(B)=\{l^{(i)}:l^{(i)}=(l_1,\ldots,\hat{l}_i,\ldots,l_m)^T, \;i=1,\ldots,m\},
  \end{align*}
where $\tau(\hat{l}(B),l(B))=\{i:\hat{l}_i = l_i, i=1,\ldots,m\}$.
\end{theorem}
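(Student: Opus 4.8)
The plan is to show that when $\tau(\hat l(B),l(B))=\emptyset$, the exchange construction of Lemma~\ref{le:3.7} actually produces an \emph{overall} lower bound set, and that this follows essentially by a direct application of that lemma combined with a careful bookkeeping of coordinates. The only thing Theorem~\ref{th:3.9} adds over Lemma~\ref{le:3.7} is the hypothesis $|\tau(\hat l(B),l(B))|=0$, whose role (as the example before the theorem shows) is to rule out the degenerate situation in which some $l^{(i)}$ coincides with $l(B)$ itself; so the main content to verify is that the conclusion of Lemma~\ref{le:3.7} still holds verbatim. I would therefore begin by invoking Lemma~\ref{le:3.7} directly: since $\hat l(B)$ is a partial lower bound for $F(X^*_B)$ and $l(B)$ is a totally dominating point, the set $\mathcal{L}(B)=\{l^{(i)}:l^{(i)}=(l_1,\ldots,\hat l_i,\ldots,l_m)^T,\ i=1,\ldots,m\}$ is already an overall lower bound set by that lemma. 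The hypothesis $|\tau|=0$ is not needed for that conclusion; it is needed only to guarantee the set is a genuine \emph{improvement}, i.e.\ that $l(B)\notin\mathcal{L}(B)$ and indeed each $l^{(i)}$ strictly dominates $l(B)$ in exactly one coordinate.

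Concretely, the steps I would carry out are: (1) restate that $\mathcal{L}(B)$ is an overall lower bound set by Lemma~\ref{le:3.7}, since the hypotheses of that lemma are met; (2) observe that for each $i$, because $\hat l(B)$ is a partial lower bound we have $z^*\preceq\hat l(B)$ and, since $l(B)$ is totally dominating, $l_i\le\inf_{x\in X^*_B}f_i(x)$, hence $l_i\le\hat l_i$ for every $i$ (this is where one uses that $\hat l(B)$ lies above the ideal point and below some image point); (3) combine this with $|\tau(\hat l(B),l(B))|=0$, i.e.\ $\hat l_i\neq l_i$ for all $i$, to deduce $l_i<\hat l_i$ for every $i$; (4) conclude that $l^{(i)}=(l_1,\ldots,\hat l_i,\ldots,l_m)^T$ satisfies $l(B)\preceq l^{(i)}$ with $l(B)\neq l^{(i)}$, so $\mathcal{L}(B)$ is a strictly improved overall lower bound set and contains none of the original coordinates unchanged. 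Throughout, I would lean on Lemma~\ref{le:3.5} for the three-part characterization of partial lower bounds and on Remark~\ref{re:3.6} for the decomposition $X^*_B=\bar X\,\cup\,(X^*_B\setminus\bar X)$ with $\hat l(B)\sim F(\bar x)$ on $\bar X$ and $\hat l(B)\preccurlyeq F(x)$ off it.

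I expect the main obstacle — really the only subtlety — to be Step~(2): establishing the coordinatewise inequality $l_i\le\hat l_i$ for all $i$. One must argue that a totally dominating point is dominated by the ideal point of $F(X^*_B)$, i.e.\ $l(B)\preccurlyeq z^*$, since $F(X^*_B)\subseteq l(B)+\mathbb{R}^m_+$ forces $l_i\le f_i(x)$ for all $x\in X^*_B$ and hence $l_i\le\min_{x\in X^*_B}f_i(x)=z^*_i$; then from $z^*\preccurlyeq\hat l(B)$ (part (iii) of Lemma~\ref{le:3.5}, reading $\preceq$ there as implying $\preccurlyeq$) one gets $l_i\le z^*_i\le\hat l_i$. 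Once that chain of inequalities is in place, the hypothesis $|\tau|=0$ upgrades each $\le$ to a strict $<$, and the rest is the transitivity argument already recorded in the proof of Lemma~\ref{le:3.7}. So the proof is short: cite Lemma~\ref{le:3.7} for the overall-lower-bound property, and add the one-paragraph verification that under $|\tau|=0$ every $l^{(i)}$ strictly improves on $l(B)$, so no element of $\mathcal{L}(B)$ is redundant.
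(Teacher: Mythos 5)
Your proposal is correct and follows essentially the same route as the paper, whose proof of this theorem consists of the single remark that it is analogous to the proof of Lemma~\ref{le:3.7}. You in fact go a step further than the paper by making explicit why the hypothesis $|\tau(\hat{l}(B),l(B))|=0$ matters: the chain $l_i\le z^*_i\le \hat{l}_i$, upgraded to strict inequality by that hypothesis, shows each $l^{(i)}$ strictly improves on $l(B)$ and that $l(B)\notin\mathcal{L}(B)$ --- a point the paper leaves implicit.
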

\begin{proof}
The proof is analogous to the proof of Lemma \ref{le:3.7}.\qed
\end{proof}

Figure 1(b) shows the improvement on a totally dominating point by Theorem \ref{th:3.9} for a bi-objective problem. However, the improvement relies on a known partial lower bound that is not attainable for conflict objectives. In order to find a partial lower bound for Theorem \ref{th:3.9}, we present the following lemma.

\begin{lemma}\label{le:3.10}
Let $z^*\in\mathbb{R}^m$ be the ideal point of $F(X^*_B)$, and let $\hat{z}^*$ be the ideal point of a nonempty compact subset $S$ of $F(B)$. If $\hat{z}^*\in \mathcal{P}(B)$, then we have
\begin{align}
  (\hat{z}^*-\mathbb{R}_+^m\backslash{\{0\}})\cap (F(X^*_B)+\mathbb{R}_+^m)=\emptyset.\label{E:3.5}
\end{align}
\end{lemma}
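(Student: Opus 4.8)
The plan is to argue by contradiction, exploiting the minimality property of the ideal point $\hat{z}^*$ of $S$ together with the definition of partial lower bound. Suppose \eqref{E:3.5} fails, so there is a point $y$ lying in both $\hat{z}^*-\mathbb{R}_+^m\backslash\{0\}$ and $F(X^*_B)+\mathbb{R}_+^m$. From $y\in F(X^*_B)+\mathbb{R}_+^m$ we get some $x\in X^*_B$ with $F(x)\preccurlyeq y$; from $y\in \hat{z}^*-\mathbb{R}_+^m\backslash\{0\}$ we get $y\preccurlyeq \hat{z}^*$ with $y\neq\hat{z}^*$, hence $y\preceq \hat{z}^*$. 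Chaining these gives $F(x)\preceq \hat{z}^*$, i.e.\ $F(x)\preccurlyeq\hat{z}^*$ and $F(x)\neq\hat{z}^*$.

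Next I would translate $F(x)\preccurlyeq\hat{z}^*$ into a statement about $S$. Since $\hat{z}^*$ is the ideal point of $S\subseteq F(B)$, for each coordinate $i$ there is (by compactness of $S$) a point $s^{(i)}\in S$ with $s^{(i)}_i=\hat{z}^*_i$, and $\hat{z}^*_i\leq s_i$ for all $s\in S$. The key step is to use the fact that $\hat{z}^*\in\mathcal{P}(B)$, i.e.\ $\hat{z}^*$ is a partly dominating point lying in $z^*+\mathbb{R}_+^m\backslash\{0\}$; by Lemma \ref{le:3.5} (or Remark \ref{re:3.6}) this means there is $\bar{x}\in X^*_B$ with $\hat{z}^*\preccurlyeq F(\bar{x})$ and also an $x'\in X^*_B$ with $\hat{z}^*\sim F(x')$, while $\hat{z}^*\neq z^*$. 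Now combine $F(x)\preccurlyeq\hat{z}^*$, $F(x)\neq\hat{z}^*$ with $\hat{z}^*\preccurlyeq F(\bar{x})$: by transitivity $F(x)\preccurlyeq F(\bar{x})$, and since $F(x)\neq\hat{z}^*\preccurlyeq F(\bar{x})$ we may conclude $F(x)\preceq F(\bar{x})$ unless $F(x)=F(\bar{x})=\hat{z}^*$, which is excluded. But $F(x)\preceq F(\bar{x})$ with $x,\bar{x}\in X^*_B$ contradicts the fact that $X^*_B$ is a Pareto set (no image strictly dominates another). If instead $x=\bar{x}$ one still gets $F(x)\preccurlyeq\hat{z}^*\preccurlyeq F(x)$, forcing $F(x)=\hat{z}^*$, again a contradiction. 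This establishes \eqref{E:3.5}.

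I expect the main obstacle to be the bookkeeping around the weak versus strict domination symbols: one must be careful that $y\preccurlyeq\hat{z}^*$ with $y\neq\hat{z}^*$ propagates correctly through the chain to yield an honest dominance relation $F(x)\preceq F(\bar{x})$ between two Pareto-optimal images, and to rule out the degenerate case $F(x)=\hat{z}^*$ using $\hat{z}^*\neq z^*$ and the indifference relation $\hat{z}^*\sim F(x')$. A secondary technical point is invoking compactness of $S$ to guarantee that each coordinate minimum defining $\hat{z}^*$ is actually attained in $S$, which is what lets us relate $\hat{z}^*$ back to genuine feasible points; this is where the hypothesis "nonempty compact subset" is used. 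Everything else is a routine application of the ordering definitions and the transitivity of $\preccurlyeq$ and $\preceq$ recorded in Section 2.
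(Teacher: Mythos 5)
Your proposal is correct and follows essentially the same route as the paper: assume a point in the intersection, deduce $F(x)\preceq \hat{z}^*$ for some $x\in X^*_B$ by chaining the two memberships, and derive a contradiction with $\hat{z}^*$ being a partial lower bound (the paper simply cites Lemma \ref{le:3.5} at this point, whereas you unpack the contradiction via the Pareto optimality of points in $X^*_B$, which is a harmless elaboration). The only superfluous element is the discussion of compactness and the attainment of the coordinate minima of $S$: neither your argument nor the paper's ever uses these, since the statement only needs $\hat{z}^*\in\mathcal{P}(B)$.
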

\begin{proof}
Assume there is an objective vector $z'\in(\hat{z}^*-\mathbb{R}_+^m\backslash{\{0\}})\cap (F(X^*_B)+\mathbb{R}_+^m)$. On the one hand, we have $z'\in\hat{z}^*-\mathbb{R}_+^m\backslash{\{0\}}$, it follows that $z'\preceq \hat{z}^*$. On the other hand, from $z'\in F(X^*_B)+\mathbb{R}_+^m$, we know that there exists a point $x^*\in X^*_B$ such that $F(x^*)\preccurlyeq z'$. Thus we have $F(x^*)\preceq \hat{z}^*$, which is a contradiction to Lemma \ref{le:3.5}. \qed
\end{proof}

\begin{figure}
  \centering
  \subfigure[]{
    \includegraphics[width=0.45\textwidth]{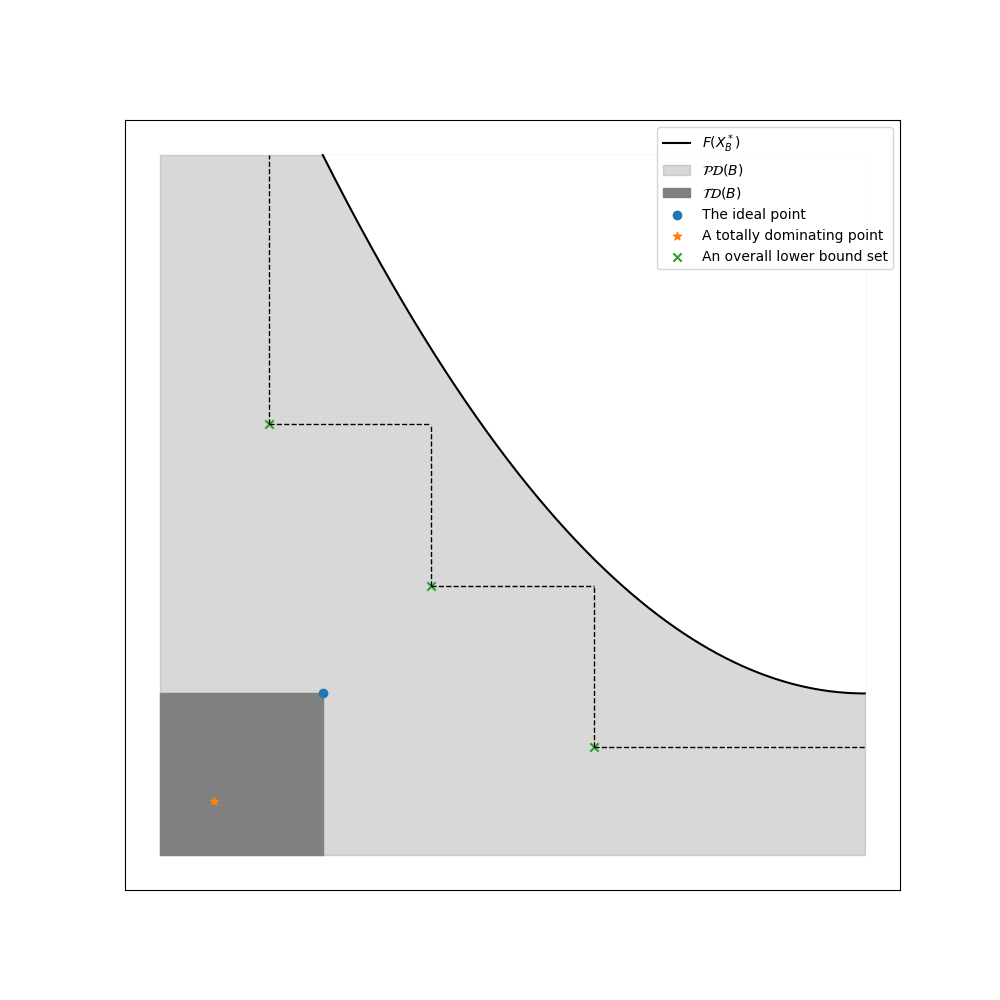}}
  \subfigure[]{
    \includegraphics[width=0.45\textwidth]{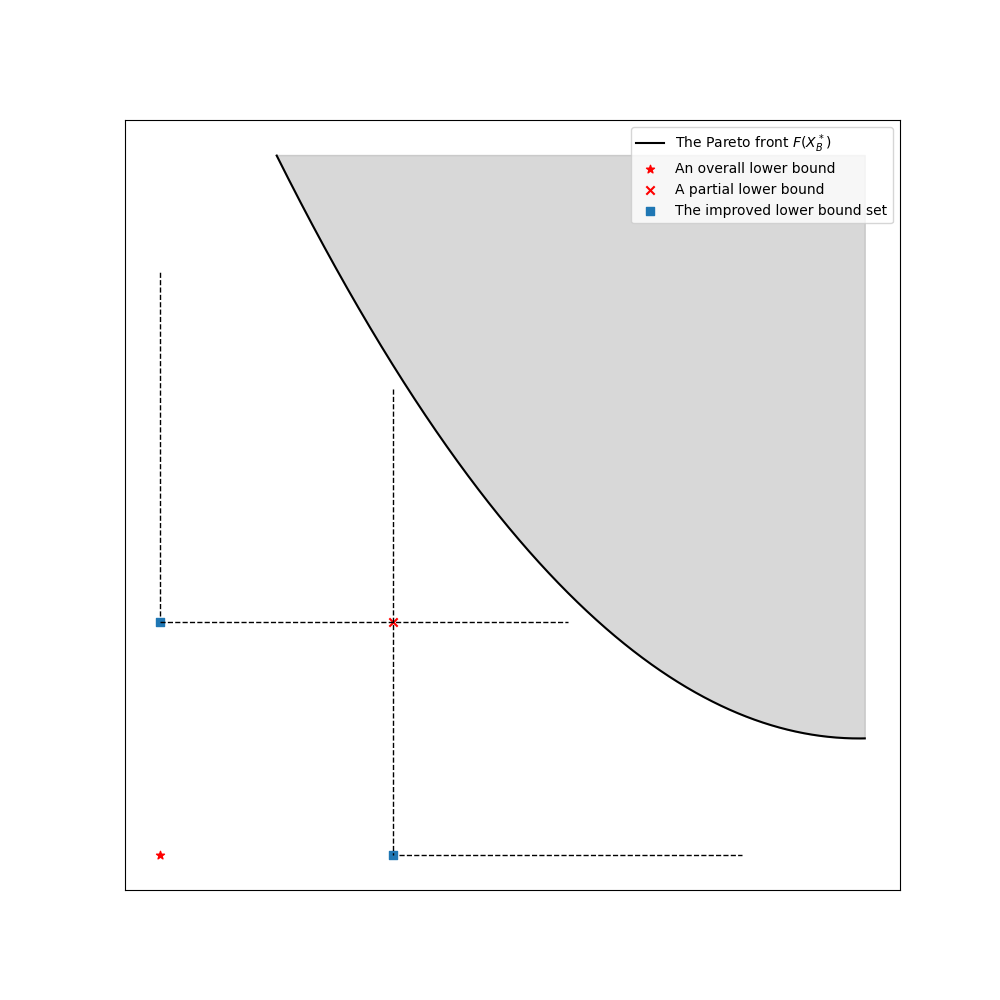}}
  \subfigure[]{
    \includegraphics[width=0.45\textwidth]{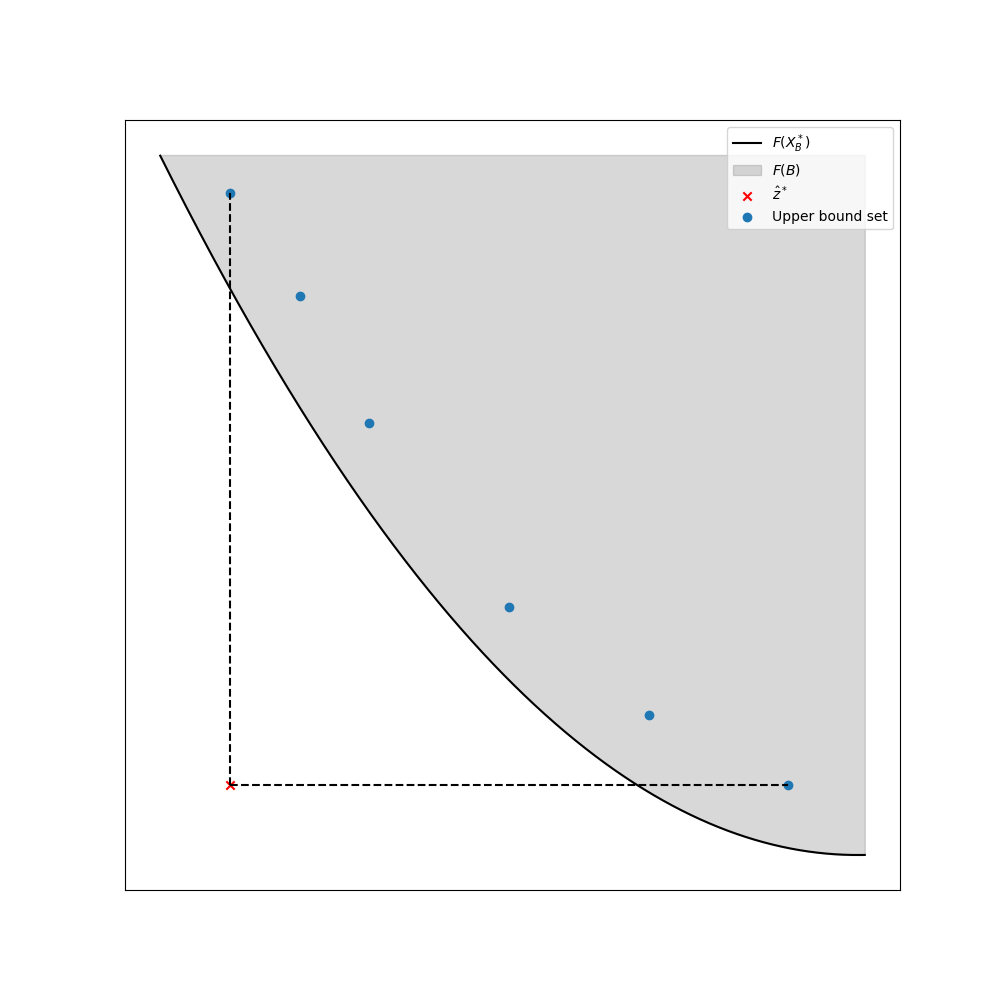}}
  \subfigure[]{
    \includegraphics[width=0.45\textwidth]{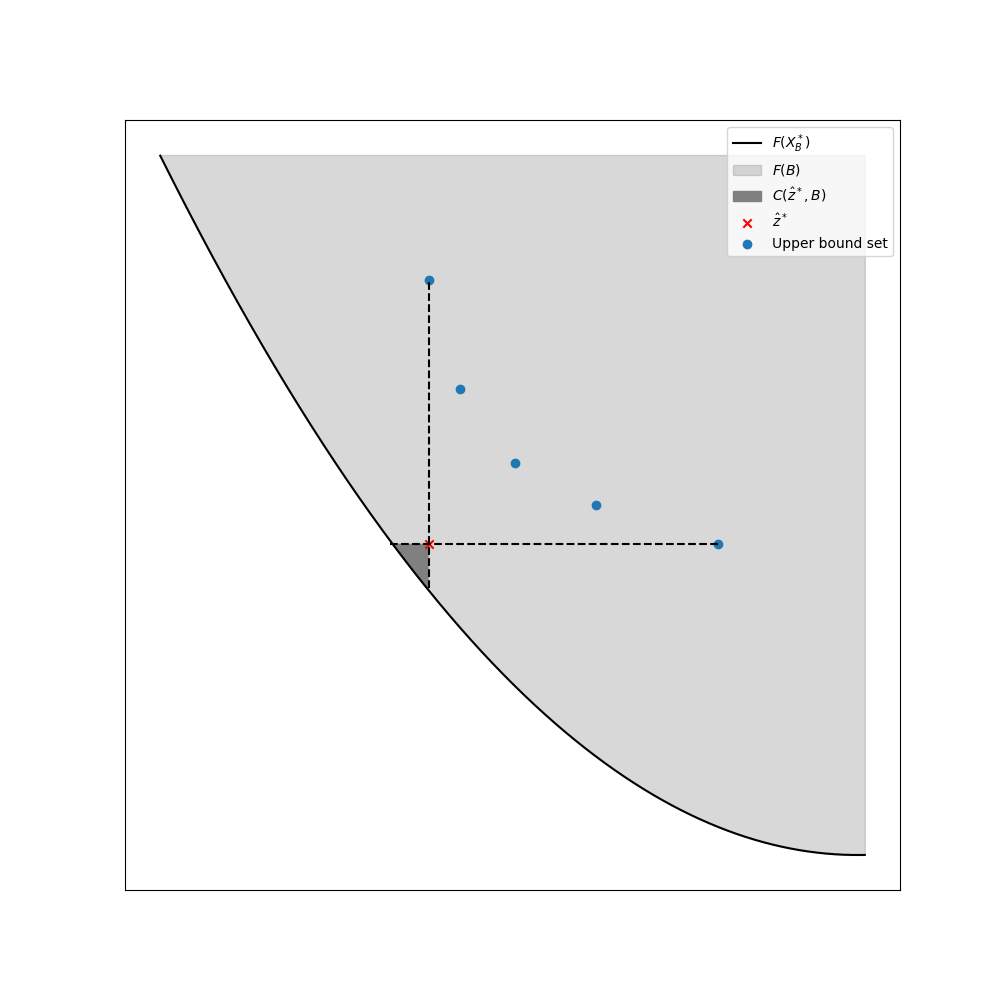}}
  \caption{Details about improving the lower bound: (a) An example of Remark \ref{re:3.3}; (b) Improve an overall lower bound by a partial lower bound; (c) A partial lower bound induced by upper bound set; (d) The ideal point of upper bound set does not satisfy (3.5).}
\end{figure}

Recall that the upper bound set, which has been already found by mini MOEA, is a nonempty compact set. Figure~1(c) shows an example for a partial lower bound induced by upper bound set. Therefore, according to Lemma \ref{le:3.10}, whether the ideal point $\hat{z}^*$ of the upper bound set is a partial lower bound can be determined by checking whether \eqref{E:3.5} is satisfied.
Inspired by \cite{ref31}, this can be done by exploring the search region
$$C(\hat{z}^*,B):= (\hat{z}^*-\mathbb{R}_+^m\backslash{\{0\}})\cap (F(X^*_B)+\mathbb{R}_+^m)$$
which is in order to determine whether $C(\hat{z}^*,B)$ contains feasible points, and if so we assert $\hat{z}^*$ is not a partial lower bound. Such an exploration can be achieved by solving the following mathematical program associated to the search region $C(\hat{z}^*,B)$:
\begin{align*}
P(\hat{z}^*,B)\qquad \min\quad &\sum_{j=1}^m f_j(x)\\
s.t.\quad &F(x)\preceq \hat{z}^*,\\
&x\in B.
\end{align*}
Figure 1(d) shows that $\hat{z}^*$ is not a partial lower bound because $C(\hat{z}^*,B)$ contains feasible points. Note that a minimal solution $\tilde{x}$ of $P(\hat{z}^*,B)$ will be output by a solver even if $C(\hat{z}^*,B)$ is empty, hence we should determine the Pareto dominance relation between $F(\tilde{x})$ and $\hat{z}^*$. If $F(\tilde{x})$ dominates $\hat{z}^*$, then $\hat{z}^*$ is not a partial lower bound; otherwise, $\hat{z}^*$ is a partial lower bound.

Algorithm 2 gives an implementation of Theorem \ref{th:3.9}.

\begin{algorithm}
\caption{Improve lower bounds}
  \SetKwInOut{Input}{Input}\SetKwInOut{Output}{Output}
  \Input{$B$, $l(B)=(l_1,\ldots,l_m)^T$, $\hat{z}^*=(\hat{z}^*_1,\ldots,\hat{z}^*_m)^T$\;}
  \Output{$\mathcal{L}(B)$;}
  %\Output{$\hat{\mathcal{L}}=\{\hat{\mathcal{L}}^{(1)},\ldots,\hat{\mathcal{L}}^{(|\mathcal{B}|)}\}$;}
  $\mathcal{L}(B)\leftarrow \emptyset$\;
  Obtain a minimal solution $\tilde{x}$ of $P(\hat{z}^*,B)$\;
  $\tau(\hat{z}^*,l(B))=\{i:\hat{z}^*_i\neq l(B)_i, i=1,\ldots,m\}$\;

  \If{$F(\tilde{x})\npreceq \hat{z}^*\;and\;|\tau(\hat{z}^*,l(B))|=0$}{
        \lFor{$i=1,\ldots,m$}
            {$\mathcal{L}(B)\leftarrow \mathcal{L}(B)\cup (l_1,\ldots,\hat{z}^*_i,\ldots,l_m)^T$\;}}
\end{algorithm}

\subsection{Elitism}
In PBB-MOEA, the solvers (mini MOEA and the solver for $P(\hat{z}^*,B)$) must be called multiple times in order to obtain tight bounds. As the number of subregions increases, the computational cost is undoubtedly expensive. To circumvent this problem, we introduce a strategy called \emph{elitism} to reduce the number of the solver calls.

For a clearer presentation, we first introduce data tracking in PBB-MOEA. A subregion after subdivision is stored with an index that will be inherited by the data stemming from this subregion. For instance, if we set the index of a subregion to $j$, then the indexes of each upper bound and lower bound with respect to this subregion will be $j$, besides, the ideal point of the upper bound set and the improved lower bound set will also be $j$.

For upper bounds, the elitism is to determine some subregions that are easy to search for tight upper bounds.
The \emph{elite subregion collection} can be backtracked by the indexes stored in $\mathcal{U}^{nds}$ and is denoted by $\mathcal{E}_{U}$. In this way it is only necessary to apply mini MOEA to $\mathcal{E}_{U}$.

For lower bounds, the elitism is to find subregions whose lower bounds are more likely to be dominated by $\mathcal{U}^{nds}$. From the view of geometry, the elite lower bounds are closer to $\mathcal{U}^{nds}$ than others, which can be found in $\mathcal{L}_k$ by Pareto dominance relation and are denoted by $\mathcal{L}^-$. Then, by the indexes stored in $\mathcal{L}^-$, the collection of these ``worst'' subregions $\mathcal{E}_L$ can be constructed. As a result, we only need to apply Algorithm 2 to $\mathcal{E}_L$.

In addition, considering the stability of mini MOEA, PBB-MOEA will reselects $\mathcal{E}_{U}$ from all subregions when a criterion is satisfied. This is called a \emph{repair operation}. More details about the elitism are discussed in subsection 3.5.
\subsection{Discarding test}
PBB-MOEA still uses the dominance relation-based discarding test.

\begin{theorem}\label{th:3.11}
Let $\mathcal{U}^{nds}$ be a nondominated upper bound set with respect to the current subregion collection $\mathcal{B}$, let $\mathcal{L}(B)$ be an overall lower bound set with respect to subregion $B$ generated by Algorithm 2. If for every $l\in\mathcal{L}(B)$ there exists an upper bound $u\in\mathcal{U}^{nds}$ such that $u\preceq l$, then the subregion $B$ does not contain any Pareto solution.
\end{theorem}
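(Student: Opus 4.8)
The plan is to argue by contradiction, using nothing more than the defining property of an overall lower bound set (Definition~\ref{de:3.2}) together with transitivity of the dominance relations. Suppose, contrary to the claim, that the subregion $B$ contains a Pareto optimal solution $x^*$ of (P). The first step I would take is to observe that then $x^*\in X^*_B$: a point that is Pareto optimal for (P) over all of $\Omega$ and happens to lie in $B$ is, a fortiori, Pareto optimal with respect to the smaller feasible set $\Omega\cap B$ (no $x\in\Omega$ satisfies $F(x)\preceq F(x^*)$, hence certainly no $x$ in any neighborhood intersected with $\Omega\cap B$ does), so it belongs to the local Pareto set $X^*_B$.

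Next, since $\mathcal{L}(B)$ is, by hypothesis, an overall lower bound set for $F(X^*_B)$ (which is exactly what Algorithm~2 produces under the conditions of Theorem~\ref{th:3.9}), applying Definition~\ref{de:3.2} to $x^*\in X^*_B$ yields a point $l\in\mathcal{L}(B)$ with $l\preccurlyeq F(x^*)$. The assumption of the theorem then furnishes an upper bound $u\in\mathcal{U}^{nds}$ with $u\preceq l$. The core step is to chain these two relations: from $u\preceq l$ we get $u_i\le l_i$ for all $i$ and $u_j<l_j$ for at least one index $j$; together with $l_i\le F_i(x^*)$ for all $i$ this gives $u_i\le F_i(x^*)$ for all $i$ and $u_j<F_j(x^*)$, i.e. $u\preceq F(x^*)$. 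Finally I would invoke the fact that every element of $\mathcal{U}^{nds}$ is the image under $F$ of a feasible point (the upper bounds are obtained from feasible points — midpoints of subregions or population points returned by the mini MOEA), say $u=F(y)$ with $y\in\Omega$. Then $F(y)\preceq F(x^*)$ contradicts the Pareto optimality of $x^*$, so no such $x^*$ exists and $B$ contains no Pareto solution.

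I do not expect a serious obstacle; this is essentially the standard dominance-based fathoming argument. The only points that need a little care are (i) making sure the composition of $u\preceq l$ with $l\preccurlyeq F(x^*)$ yields the strict relation $u\preceq F(x^*)$ rather than merely $u\preccurlyeq F(x^*)$ — which is why one must carry along the strict inequality at the index $j$ — and (ii) being explicit that the members of $\mathcal{U}^{nds}$ are genuine feasible objective vectors, so that domination of $F(x^*)$ by such a vector really does violate global Pareto optimality. Neither requires any nontrivial estimate, so the proof should be short.
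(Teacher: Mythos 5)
Your argument is correct and is essentially the paper's own proof: both use the defining property of the overall lower bound set to obtain $l\preccurlyeq F(x)$, chain it with $u\preceq l$ to get $u\preceq F(x)$, and conclude from the feasibility of $u$ that no point of $B$ can be Pareto optimal. If anything, your write-up is slightly more careful than the paper's, since you restrict the lower-bound property to $x^*\in X^*_B$ (as Definition~\ref{de:3.2} actually states, and justifying that a global Pareto point in $B$ lies in $X^*_B$) and you make the strictness of the chained relation explicit, whereas the paper's proof loosely asserts the bound for every $x\in B$.
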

\begin{proof}
Because $\mathcal{L}(B)$ is an overall lower bound set with respect to subregion $B$, for each point $x\in B$, there exists a lower bound $l\in\mathcal{L}(B)$ such that $l\preccurlyeq F(x)$. Then it is easy to see that for each point $x\in B$, there exists an upper bound $u\in \mathcal{U}^{nds}$ such that $u\preceq F(x)$. Because $u$ is a feasible objective vector, $B$ does not contain any Pareto solution.
\end{proof}

The discarding test is described in Algorithm 3, where $\mathcal{U}^{nds}$ is a nondominated upper bound set, $\mathcal{B}$ is a subregion collection and $\mathcal{L}$ is a lower bound set. As we plan to track data by indexes, in line 1 the indexes in the subregion collection $\mathcal{B}$ will be collected and stored in the set $\mathcal{I}$. In the for-loop a considered subregion will be removed from $\mathcal{B}$ as well as its lower bound set from $\mathcal{L}$ if each point of its lower bound set is dominated by $\mathcal{U}^{nds}$.

\begin{algorithm}[H]
\caption{\texttt{DiscardingTest}$(\mathcal{L},\mathcal{U}^{nds},\mathcal{B})$}
  \SetKwInOut{Input}{Input}\SetKwInOut{Output}{Output}
  \Input{$\mathcal{L},\mathcal{U}^{nds},\mathcal{B}$\;}
  \Output{$\mathcal{L},\mathcal{B}$\;}
  Collect all indexes from $\mathcal{B}$ and store the indexes in $\mathcal{I}$\;
  %$\mathcal{I}\leftarrow \emptyset$\;
  \For{each $i\in \mathcal{I}$}{
  \If{for every $l\in\mathcal{L}^{(i)}$ there exists $u\in\mathcal{U}^{nds}$ such that $u\preceq l$}{$\mathcal{L}\leftarrow \mathcal{L}\backslash \mathcal{L}^{(i)}$,
  $\mathcal{B}\leftarrow \mathcal{B}\backslash \mathcal{B}^{(i)}$.}
  }
\end{algorithm}

\subsection{The complete framework}
PBB-MOEA is given in Algorithm 4. The repair operation (lines 7-10) will be performed if the iteration counter $k$ reaches $3n$ and the elitism (lines 12-18) is applied otherwise. PBB-MOEA stops when the Hausdorff distance between $\mathcal{U}^{nds}$ and $\mathcal{L}_k$ drops below the desired accuracy $(0,0.02]$ or iteration counter $k$ reaches $6n$. An approximation of the Pareto optimal set can also be obtained if preimages of $\mathcal{U}^{nds}$ are output by mini MOEA. In addition, at each iteration $\mathcal{U}^{nds}$ will be set to an empty set to release memory for storing new upper bounds.

\begin{algorithm}[htbp]
  \SetNoFillComment
  \SetKwData{flag}{flag}\SetKwData{flags}{flags}\SetKwData{indexes}{Indexes}
  \SetKwFunction{DE}{DE}\SetKwFunction{ILB}{ImproveLowerBound}\SetKwFunction{MiniMOEA}{mini MOEA}
  \SetKwFunction{A}{Algorithm 2}
  \SetKwFunction{DT}{DiscardingTest}
  \SetKwInOut{Input}{Input}\SetKwInOut{Output}{Output}
  \Input{problem (P), termination criteria, repair criterion;}
  \Output{$\mathcal{B}_{k}$, $\mathcal{L}_k$, $\mathcal{U}^{nds}$;}
  \BlankLine
  %\tcc{Initialization}
  Set the iteration counter $k$ to $1$, set the \flag of $\mathcal{B}_0$ to 1\; %\tcp*{\flag=1:Elite subregion}
  %\tcc{Main-loop}
  \While{termination criteria are not satisfied}
  {$\mathcal{U}_k\leftarrow\emptyset$, $\mathcal{L}_k\leftarrow\emptyset$, $\mathcal{Z}_k^*\leftarrow \emptyset$, $\mathcal{U}^{nds}\leftarrow \emptyset$\;
  %\tcc{Branching procedure}
  Construct $\mathcal{\hat{B}}_{k}$ by bisecting all subregions in $\mathcal{B}_{k-1}$\; %\tcp*{The flags are inherited with the bisect}
  Calculate the lower bound set $\mathcal{L}_k$ for the collection $\mathcal{\hat{B}}_{k}$\;
  \eIf{repair criterion is satisfied}
  {Set all \flags to 1\;
  %Construct $\mathcal{E}_{U}$ according to \flags\;
  Obtain $\mathcal{U}_k$ and the ideal point set $\mathcal{Z}_k^*$ by applying \MiniMOEA to $\mathcal{\hat{B}}_{k}$\;
  Obtain the collection of improved lower bound sets $\hat{\mathcal{L}}$ by applying \A to $(\mathcal{\hat{B}}_{k},\mathcal{L}_k,\mathcal{Z}_k^*)$\;
  Update lower bound set $\mathcal{L}_k\leftarrow\hat{\mathcal{L}}$\;}
  {Construct $\mathcal{E}_{U}$ according to \flags\;
  Determine $\mathcal{L}^-$ from $\mathcal{L}_k$\;
  Construct $\mathcal{E}_{L}$ according to \flags in $\mathcal{L}^-$\;
  Obtain the upper bound set $\mathcal{U}_k$ by applying \MiniMOEA to $\mathcal{E}_{U}$\;
  Obtain the ideal point set $\mathcal{Z}^*_k$ by applying \MiniMOEA to $\mathcal{E}_{L}$\;
  Obtain the collection of improved lower bound sets $\hat{\mathcal{L}}$ by applying \A to $(\mathcal{E}_{L},\mathcal{L}^-,\mathcal{Z}^*_k)$\;
  Update lower bound set $\mathcal{L}_k\leftarrow(\mathcal{L}_k\backslash\mathcal{L}^-)\cup\hat{\mathcal{L}}$\;}
  %Construct $\mathcal{E}_{U}$ by finding the subregions whose \flags are equal to 1\;
  %\tcc{Bounding procedure}
  Determine the nondominated upper bound set $\mathcal{U}^{nds}$ from $\mathcal{U}_k$\;
  Update the \flags according to $\mathcal{U}^{nds}$\;
  $\mathcal{L}_k,\mathcal{B}_{k}\leftarrow$\DT{$\mathcal{L}_k,\mathcal{U}^{nds},\mathcal{\hat{B}}_{k}$}\;
  $k\leftarrow k+1$.}
  \caption{PBB-MOEA}
\end{algorithm}

Next we address several issues left open during the implementation of PBB-MOEA.

{\bf Elitism.} Besides an index, each subregion is stored with a boolean called flag stating if it is an elite. The flag is set to 1 if a subregion is an elite and to 0 otherwise.
When a subregion is bisected, two new subregions inherit its flag. In order to perform the repair operation, in line 7, the flags of all subregions are set to 1. In line 21 the flags of the elite subregions whose upper bounds are not in $\mathcal{U}^{nds}$ will be set to 0.

{\bf Parallelization.} To achieve parallelization, in line 4 all subregions are bisected simultaneously. Note that all subregions have the same width, so the bisection will always be along the same coordinate. The parallelization of PBB-MOEA is without communication, i.e., there is no need to design a communication protocol to provide the processors with rules for requesting subproblems and exchanging data. For example, in line 8, algorithm first creates an initial pool of subregions which are corresponding to several subproblems, then distributes each processor one subproblem at a time and requires each processor to perform mini MOEA on the subproblem assigned to it, and finally collects the upper bounds. The parallelization of lines 9, 15, 16 and 17 is similar.

{\bf Constraint handling.} Two difficulties need to be overcome when PBB-MOEA is employed for solving (MOP): (i) determining the feasibility of subregions; (ii) finding feasible upper bounds.

To overcome the first difficulty, the \emph{feasibility test} which is same as the one mentioned in \cite{ref18} will be added between lines 4 and 5.

To overcome the second one, $l_1$ exact penalty function is used in the mini MOEA to calculate the fitness values (see \cite{ref6}). For example, if the Tchebycheff metric is used to calculate the fitness values of individuals for (P):
\begin{align*}
FV(x)=\mathop{\max}\limits_{1\leq i\leq m}\{\lambda_i|f_i(x)-z_i^*|\},
\end{align*}
then for (MOP), the fitness value can be calculated by
\begin{align*}
FV(x)=\mathop{\max}\limits_{1\leq i\leq m}\{\lambda_i|f_i(x)-z_i^*|\}+\rho\mathop{\sum}\limits_{j=1}^{p}|\min\{g_j(x),0\}|,
\end{align*}
where $\rho>0$ is penalty coefficient.

Although the exact penalty function is employed, mini MOEA does not guarantee the feasibility of upper bounds. Hence, in lines 8 and 15 of Algorithm 4, a \emph{filtering operation} should be added to remove upper bounds whose preimages are infeasible. The preimages can be output together with corresponding upper bounds by mini MOEA.

Furthermore, PBB-MOEA still calculate lower bounds by \eqref{E:3.3} for constrained subproblems, because the overall lower bound for unconstrained subproblem is also the overall one for constrained subproblem. In addition, $\mathcal{U}^{nds}$ will not be set to an empty set to increase the quantity of solutions.

\section{Convergence Results}
In this section, we analyze the convergence properties of PBB-MOEA. First, two lemmas which plays a crucial role in the convergence proof are introduced.

\begin{lemma}\label{le:4.1}
Let $B$ be a subregion in the collection $\mathcal{B}_k$. If $\lim\limits_{k\rightarrow\infty}w_k(B)_i=0$ for $i\in\{1,\ldots,m\}$, then for each $x^*\in X^*_B$, we have
\begin{align*}
\lim\limits_{k\rightarrow\infty}d(l(B),F(x^*))=0,
\end{align*}
where $l(B)$ is the lower bound for $F(X^*_B)$ calculated by \eqref{E:3.3}.
\end{lemma}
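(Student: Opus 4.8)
The plan is to estimate, coordinate by coordinate, the gap $|l_i-f_i(x^*)|$ between the $i$-th component of the lower bound $l(B)$ in \eqref{E:3.3} and $f_i(x^*)$, and to show that each such gap tends to $0$; since $d(l(B),F(x^*))=\|l(B)-F(x^*)\|_2\le\sqrt{m}\,\max_{1\le i\le m}|l_i-f_i(x^*)|$, this yields the claim. Writing $c=c(B)$ for the midpoint and inserting the definition of $l_i$ from \eqref{E:3.3}, the triangle inequality gives
\begin{align*}
|l_i-f_i(x^*)|\le|f_i(c)-f_i(x^*)|+\tfrac12\min\{L_{i,1}\,\omega_{(\infty)},\,L_{i,\infty}\,\omega_{(1)}\},
\end{align*}
so it suffices to show that both terms on the right-hand side vanish as $k\to\infty$.

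First I would observe that the hypothesis that every width of $B$ shrinks to $0$ forces $\omega_{(\infty)}\to 0$, $\omega_{(1)}\to 0$ and the Euclidean diameter $\omega_{(2)}\to 0$ as $k\to\infty$. Since both $x^*\in X^*_B\subseteq B$ and $c\in B$, we have $\|x^*-c\|_2\le\omega_{(2)}\to 0$; because each $f_i$ is Lipschitz continuous on the compact box $[a,b]$ with some constant $\ell_i$, this gives $|f_i(c)-f_i(x^*)|\le\ell_i\|x^*-c\|_2\to 0$, which disposes of the first term.

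For the second term, the key point — and the only mildly delicate one — is that the Lipschitz estimates $L_{i,1}$ and $L_{i,\infty}$ stay bounded along the iterations. Since these are computed as the natural interval extension of $\nabla f_i$ over $B$, and since every subregion $B=B_k$ produced by bisection is contained in the initial box $[a,b]$, inclusion–monotonicity of interval arithmetic guarantees that the interval enclosure of $\nabla f_i$ on $B_k$ lies inside the fixed enclosure on $[a,b]$; hence $L_{i,1}\le\bar L_{i,1}$ and $L_{i,\infty}\le\bar L_{i,\infty}$ for all $k$, where $\bar L_{i,1},\bar L_{i,\infty}<\infty$ are the values obtained on $[a,b]$. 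Consequently
\begin{align*}
\tfrac12\min\{L_{i,1}\,\omega_{(\infty)},\,L_{i,\infty}\,\omega_{(1)}\}\le\tfrac12\,\bar L_{i,1}\,\omega_{(\infty)}\longrightarrow 0\qquad(k\to\infty),
\end{align*}
so the second term vanishes too. Combining the two estimates gives $|l_i-f_i(x^*)|\to 0$ for each $i=1,\ldots,m$, and therefore $d(l(B),F(x^*))\to 0$, as claimed; one may note the bound is in fact uniform over $x^*\in X^*_B$, although only the pointwise statement is needed here.
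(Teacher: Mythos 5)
Your proof is correct and follows essentially the same route as the paper's: bound $|l_i-f_i(x^*)|$ coordinate-wise by the Lipschitz variation of $f_i$ over $B$ plus the offset $\tfrac12\min\{L_{i,1}\omega_{(\infty)},L_{i,\infty}\omega_{(1)}\}$, and let both terms vanish as the widths shrink. Your explicit justification (via inclusion monotonicity of the interval extensions) that $L_{i,1}$ and $L_{i,\infty}$ remain bounded is a detail the paper leaves implicit, but it does not change the argument.
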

\begin{proof}
Since $\lim\limits_{k\rightarrow\infty}w_k(B)_i=0$, we have $\lim\limits_{k\rightarrow\infty}\omega_{k,(1)}=0$ and $\lim\limits_{k\rightarrow\infty}\omega_{k,(\infty)}=0$. Thus it follows that
$$\lim\limits_{k\rightarrow\infty}\min\{L_{i,1}\omega_{k,(\infty)},L_{i,\infty}\omega_{k,(1)}\}=0.$$
Then, for each $x^*\in X^*_B$, we have
\begin{align*}
0\leq\lim\limits_{k\rightarrow\infty}d(l(B),F(x^*))\leq\lim\limits_{k\rightarrow\infty}\sqrt{\sum\limits^m_{i=1}(\min\{L_{i,1}\omega_{k,(\infty)},L_{i,\infty}\omega_{k,(1)})^2}=0.
\end{align*}
The second inequality follows from $f_i$ be Lipschitz continuous and \eqref{E:3.3}.
\end{proof}

\begin{lemma}\label{le:4.2}
Let $B$ be a subregion in the collection $\mathcal{B}_k$. If $\lim\limits_{k\rightarrow\infty}w_k(B)_i=0$ for $i\in\{1,\ldots,m\}$, then for each $x^*\in X^*_B$, we have
\begin{align*}
\lim\limits_{k\rightarrow\infty}d(F(x^*),u(B))=0,
\end{align*}
where $u(B)$ is the image of any point in $B$.
\end{lemma}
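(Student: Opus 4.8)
The plan is to use the definition of $u(B)$ as the image $F(y)$ of some point $y\in B$, together with the fact that $x^{*}\in X^{*}_B\subseteq B$; since the box $B$ has all its side lengths shrinking to zero, $y$ and $x^{*}$ become arbitrarily close, and Lipschitz continuity of $F$ then squeezes their images together. This parallels the proof of Lemma~\ref{le:4.1}.

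First I would fix $x^{*}\in X^{*}_B$ and write $u(B)=F(y)$ for the corresponding $y\in B$. Because $x^{*}$ and $y$ both belong to the box $B$, we have $|x^{*}_i-y_i|\le w_k(B)_i$ for every coordinate $i$, hence $\|x^{*}-y\|_2\le\omega_{k,(2)}$. The hypothesis $\lim_{k\rightarrow\infty}w_k(B)_i=0$ gives $\lim_{k\rightarrow\infty}\omega_{k,(2)}=0$, so $\|x^{*}-y\|_2\rightarrow0$ as $k\rightarrow\infty$.

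Next I would invoke Lipschitz continuity of each $f_i$ on the compact initial box $[a,b]$: there exist constants $L_i>0$ with $|f_i(p)-f_i(q)|\le L_i\|p-q\|_2$ for all $p,q\in[a,b]$. Applying this with $p=x^{*}$, $q=y$ and collecting coordinates yields
\[
0\le d(F(x^{*}),u(B))=\|F(x^{*})-F(y)\|_2\le\Bigl(\sum_{i=1}^{m} L_i^{2}\Bigr)^{1/2}\|x^{*}-y\|_2 .
\]
Letting $k\rightarrow\infty$, the right-hand side tends to $0$, so the squeeze theorem gives $\lim_{k\rightarrow\infty}d(F(x^{*}),u(B))=0$, as claimed.

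There is no genuine obstacle here; the argument is a short continuity estimate. The only two points requiring a word of care are: (i) making explicit that $u(B)$ is the image of an \emph{actual point of $B$}, which is exactly where the clause ``$u(B)$ is the image of any point in $B$'' enters, so that $x^{*}$ and this point sit inside a region of vanishing diameter; and (ii) noting that uniform Lipschitz constants over $[a,b]$ may be used, which is legitimate since $[a,b]$ is compact and the $f_i$ are Lipschitz continuous on it.
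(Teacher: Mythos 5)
Your argument is correct and is exactly the reasoning the paper compresses into one line (``a direct consequence of the Lipschitz continuity of $f_i$''): both $x^{*}$ and the point whose image is $u(B)$ lie in $B$, whose width vanishes, so Lipschitz continuity forces $d(F(x^{*}),u(B))\rightarrow 0$. Your version simply makes the distance estimate and the uniform Lipschitz constants explicit, which is a faithful elaboration rather than a different route.
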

\begin{proof}
This is a direct consequence of the Lipschitz continuity of $f_i$.
\end{proof}
\begin{remark}\label{re:4.3}
Lemmas \ref{le:4.1} and \ref{le:4.2} show that, for a single subregion, the distance between its Lipschitz lower bound and its Pareto optimal values decreases with the decreasing its width as well as the distance between its upper bounds and the Pareto optimal values.
\end{remark}
\begin{remark}
The conclusion of Lemma \ref{le:4.1} also holds for an improved lower bound set.
\end{remark}

With the help of the preceding lemmas, we can now prove some convergence results.
\begin{theorem}
Let $\{\mathcal{B}_k\}_{k\in\mathbb{N}}$ be the sequence of subregion collection generated by PBB-MOEA, and let $\{\mathcal{L}_k\}_{k\in\mathbb{N}}$ and $\{\mathcal{U}_k\}_{k\in\mathbb{N}}$ be the sequence of lower and upper bound sets, respectively. If $\lim\limits_{k\rightarrow\infty}w_{i}=0$ for $i\in\{1,\ldots,m\}$, then we have
\begin{align}
\lim\limits_{k\rightarrow \infty}d_H(F(X^*_k),\mathcal{L}_{k})=0\label{E:4.1}
\end{align}
and
\begin{align}
\lim\limits_{k\rightarrow \infty}d_H(F(X^*_k),\mathcal{U}_{k})=0.\label{E:4.2}
\end{align}
%Furthermore, $\lim\limits_{k\rightarrow \infty}d_H(F(X^*_k),\mathcal{U}^{nds}_{k})=0.$
\end{theorem}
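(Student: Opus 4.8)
The plan is to establish the two Hausdorff-distance limits \eqref{E:4.1} and \eqref{E:4.2} by unwinding the definition of $d_H$ into its two directed pieces and handling each with the pointwise estimates of Lemmas \ref{le:4.1} and \ref{le:4.2}, uniformized over the finitely many subregions in $\mathcal{B}_k$. I would start with \eqref{E:4.1}. Recall $\mathcal{L}_k=\bigcup_{B\in\mathcal{B}_k}l(B)$ and $F(X^*_k)=\bigcup_{B\in\mathcal{B}_k}F(X^*_B)$ (or $\mathcal{L}(B)$ in the improved case, which by the second Remark after Lemma \ref{le:4.2} obeys the same estimate). For the directed distance $d_h(\mathcal{L}_k,F(X^*_k))$: any $l\in\mathcal{L}_k$ is $l(B)$ for some $B\in\mathcal{B}_k$, and $l(B)$ is a lower bound for $F(X^*_B)\subseteq F(X^*_k)$; picking any $x^*\in X^*_B$, Lemma \ref{le:4.1} gives $d(l(B),F(X^*_k))\le d(l(B),F(x^*))\to 0$. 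For the reverse direction $d_h(F(X^*_k),\mathcal{L}_k)$: any point of $F(X^*_k)$ is $F(x^*)$ with $x^*\in X^*_B$ for some $B$, and $d(F(x^*),\mathcal{L}_k)\le d(F(x^*),l(B))\to 0$, again by Lemma \ref{le:4.1}. Taking the max of the two directed distances yields \eqref{E:4.1}. The argument for \eqref{E:4.2} is identical in structure: $\mathcal{U}_k=\bigcup_{B\in\mathcal{B}_k}u(B)$ with $u(B)$ the image of a point in $B$, so Lemma \ref{le:4.2} supplies $d(F(x^*),u(B))\to 0$ for both directed distances, giving the second limit.

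The one genuinely delicate point — and the main obstacle — is the \emph{uniformity} of the convergence over $B\in\mathcal{B}_k$. Lemmas \ref{le:4.1} and \ref{le:4.2} are stated for a \emph{fixed} subregion $B$ tracked along the iterations, whereas here we need the estimate to hold simultaneously for all current subregions, whose number grows with $k$. The hypothesis "$\lim_{k\rightarrow\infty}w_i=0$ for $i\in\{1,\ldots,m\}$" must therefore be read as: the width of \emph{every} subregion in $\mathcal{B}_k$ tends to $0$ as $k\to\infty$, which is exactly what the parallel bisection in line~4 of Algorithm~4 guarantees (all subregions share the same width, halved — along a cyclically chosen coordinate — at each iteration, so after enough iterations every coordinate width is below any prescribed threshold). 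I would make this explicit: let $\omega_{k,(1)}$ and $\omega_{k,(\infty)}$ denote the common $l_1$- and $l_\infty$-widths at iteration $k$; these are deterministic sequences tending to $0$ independent of which $B\in\mathcal{B}_k$ we look at. The Lipschitz constants $L_{i,1},L_{i,\infty}$ are bounded over the original box $[a,b]\supseteq B$, so the bound $\tfrac12\min\{L_{i,1}\omega_{k,(\infty)},L_{i,\infty}\omega_{k,(1)}\}$ in \eqref{E:3.3}, and hence the right-hand side $\sqrt{\sum_{i=1}^m(\cdots)^2}$ in the proof of Lemma \ref{le:4.1}, is a uniform bound valid for all $B\in\mathcal{B}_k$ and tending to $0$. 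The same remark gives a uniform modulus for Lemma \ref{le:4.2} via the (global) Lipschitz moduli of the $f_i$ on $[a,b]$.

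Thus the write-up would be: first record the uniform-width observation as a short preliminary (citing line~4 of Algorithm~4), then fix $\varepsilon>0$, choose $K$ so that for $k\ge K$ the uniform bounds from Lemmas \ref{le:4.1}–\ref{le:4.2} are below $\varepsilon$ for \emph{all} $B\in\mathcal{B}_k$, and conclude that each of the four directed distances $d_h(\mathcal{L}_k,F(X^*_k))$, $d_h(F(X^*_k),\mathcal{L}_k)$, $d_h(\mathcal{U}_k,F(X^*_k))$, $d_h(F(X^*_k),\mathcal{U}_k)$ is $\le\varepsilon$ for $k\ge K$. Since $d_H$ is the max of the relevant pair, \eqref{E:4.1} and \eqref{E:4.2} follow. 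A minor subtlety to mention is that when the improved lower bound set $\mathcal{L}(B)$ of Theorem \ref{th:3.9} replaces the singleton $l(B)$, every $l^{(i)}\in\mathcal{L}(B)$ differs from $l(B)$ in at most one coordinate (replacing $l_i$ by $\hat l_i\ge z^*_i\ge l_i$), and since $\hat z^*_i\to$ the corresponding Pareto value at the same rate, the estimate of Lemma \ref{le:4.1} carries over verbatim — this is exactly the content of the Remark stating "the conclusion of Lemma \ref{le:4.1} also holds for an improved lower bound set," which I would invoke rather than re-prove.
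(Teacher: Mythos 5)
Your proposal is correct and follows essentially the same route as the paper's own proof: split $d_H$ into its two directed distances and apply Lemma \ref{le:4.1} (resp.\ Lemma \ref{le:4.2}) to the subregion containing the relevant Pareto point, or whose bound is the point under consideration. Your additional care about uniformity over the growing number of subregions (via the common width from parallel bisection and Lipschitz constants bounded on $[a,b]$) is a point the paper glosses over, and it tightens rather than changes the argument.
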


\begin{proof}
First, let us prove \eqref{E:4.1}. It is easy to see that for each $x^*\in X^*_k$, there exists a subregion $B\in\mathcal{B}_k$, such that $x^*\in B$. Then Lemma \ref{le:4.1} is applicable to concluding that
\begin{align*}
0\leq\lim\limits_{k\rightarrow\infty}d(F(x^*),\mathcal{L}_k) \leq \lim\limits_{k\rightarrow\infty}d(F(x^*),l(B))=0,\quad\forall x^*\in X^*_k,
\end{align*}
thus we obtain $\lim\limits_{k\rightarrow\infty}d_h(F(X^*),\mathcal{L}_{k})=0$.

On the other hand, for each $l\in \mathcal{L}_k$, there exists a subregion $B\in\mathcal{B}_k$, such that $l(B)=l$. Using Lemma \ref{le:4.1}, we have
\begin{align*}
0\leq\lim\limits_{k\rightarrow\infty}d(l,F(X^*_k))\leq\lim\limits_{k\rightarrow\infty}d(l(B),F(\hat{x}))=0,\quad \forall \hat{x}\in X^*_B.
\end{align*}
which means that $\lim\limits_{k\rightarrow\infty}d_h(\mathcal{L}_{k},F(X^*_k))=0$. As a result, we prove that $$\lim\limits_{k\rightarrow\infty}d_H(\mathcal{L}_{k},F(X^*_k))=0.$$
The proof of \eqref{E:4.2} is similar to the proof above and so is omitted.
\end{proof}

\begin{corollary}
Let $\{\mathcal{B}_k\}_{k\in\mathbb{N}}$ be the sequence of subregion collection generated by PBB-MOEA, and let $\{\mathcal{L}_k\}_{k\in\mathbb{N}}$ and $\{\mathcal{U}_k\}_{k\in\mathbb{N}}$ be the sequence of lower and upper bound sets, respectively. If $\lim\limits_{k\rightarrow\infty}w_{i}=0$ for $i\in\{1,\ldots,m\}$, then we have
$$\lim\limits_{k\rightarrow \infty}d_H(\mathcal{L}_{k},\mathcal{U}_{k})=0.$$
\end{corollary}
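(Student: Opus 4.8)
The plan is to obtain the statement as an immediate consequence of the preceding theorem together with the triangle inequality for the Hausdorff distance. Recall that for any three nonempty finite subsets $A$, $C$, $E$ of $\mathbb{R}^m$ one has $d_H(A,E)\le d_H(A,C)+d_H(C,E)$; this is the standard triangle inequality for the Hausdorff metric, and it descends from the triangle inequality for the directed distance $d_h$, which itself follows from $d(a,E)\le d(a,c)+d(c,E)$ for every $c\in C$. I would therefore begin by recording this inequality (or simply citing it as a basic property of $d_H$), together with the symmetry $d_H(A,C)=d_H(C,A)$.

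Next I would apply the triangle inequality with the choice $A=\mathcal{L}_k$, $C=F(X^*_k)$ and $E=\mathcal{U}_k$, which gives
\[
0\le d_H(\mathcal{L}_k,\mathcal{U}_k)\le d_H(\mathcal{L}_k,F(X^*_k))+d_H(F(X^*_k),\mathcal{U}_k)
\]
for every $k\in\mathbb{N}$. By the preceding theorem, the hypothesis $\lim_{k\to\infty}w_i=0$ for $i\in\{1,\ldots,m\}$ forces $d_H(F(X^*_k),\mathcal{L}_k)\to0$ by \eqref{E:4.1} and $d_H(F(X^*_k),\mathcal{U}_k)\to0$ by \eqref{E:4.2}; using symmetry, the right-hand side above tends to $0$. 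Letting $k\to\infty$ and applying the squeeze principle then yields $\lim_{k\to\infty}d_H(\mathcal{L}_k,\mathcal{U}_k)=0$, as claimed.

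There is essentially no deep obstacle here: the corollary is a formal consequence of the theorem, and the only points deserving attention are bookkeeping ones. One must make sure $\mathcal{L}_k$, $\mathcal{U}_k$ and $F(X^*_k)$ are nonempty finite sets so that $d_H$ is well defined and the triangle inequality is legitimate. Finiteness of the three sets follows from the finite construction in Algorithm 4, while nonemptiness of $F(X^*_k)$ is the mildly delicate point: it holds because the discarding test is safe in the sense of Theorem~\ref{th:3.11} (it never removes a subregion whose intersection with $\Omega$ contains a Pareto solution), and $\Omega$ is compact with $F$ continuous, so $X^*_k\neq\emptyset$ for every $k$. With these remarks in place the argument is complete.
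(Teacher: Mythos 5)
Your argument is correct: bounding $d_H(\mathcal{L}_k,\mathcal{U}_k)$ by $d_H(\mathcal{L}_k,F(X^*_k))+d_H(F(X^*_k),\mathcal{U}_k)$ and invoking \eqref{E:4.1}--\eqref{E:4.2} is exactly the sandwich through the Pareto-front images that the paper intends, its one-line proof merely citing Lemmas \ref{le:4.1} and \ref{le:4.2} (the pointwise versions of the same estimates) instead of the theorem. Your extra remarks on finiteness/nonemptiness are fine but not needed beyond what the paper already assumes.
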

\begin{proof}
The conclusion can be derived from Lemmas \ref{le:4.1} and \ref{le:4.2}.
\end{proof}

Now, we show that PBB-MOEA creates a covering of the Pareto set in each iteration.

\begin{lemma}\label{le:4.7}
Let $\{\mathcal{B}_k\}_{k\in\mathbb{N}}$ be a sequence of subregion collections generated by PBB-MOEA. Then, for the Pareto set $X^*$ of (P), we have
\begin{align*}
X^*\subset\cdots\subset \bigcup_{B\in\mathcal{B}_k} B\subset\cdots\subset\bigcup_{B\in\mathcal{B}_1} B \subset \bigcup_{B\in\mathcal{B}_0} B.
\end{align*}
\end{lemma}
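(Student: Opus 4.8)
The plan is to prove the chain of inclusions by induction on $k$, establishing the single step $\bigcup_{B\in\mathcal{B}_{k+1}} B \subseteq \bigcup_{B\in\mathcal{B}_k} B$ together with $X^*\subseteq\bigcup_{B\in\mathcal{B}_k} B$ for every $k$. The key observation is that the only operations PBB-MOEA performs on the subregion collection between iterations are bisection (line 4 of Algorithm 4) and removal by the discarding test (Algorithm 3); neither of these can introduce a point of the search space that was not already covered, and — crucially — the discarding test only removes subregions that provably contain no Pareto solution, which is exactly the content of Theorem \ref{th:3.11}.

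First I would fix the base case: $\mathcal{B}_0=\{[a,b]\}$ and $X^*\subseteq[a,b]=\Omega$ by definition of problem (P), so $X^*\subseteq\bigcup_{B\in\mathcal{B}_0}B$. For the inductive step, assume $X^*\subseteq\bigcup_{B\in\mathcal{B}_k}B$. Passing from $\mathcal{B}_k$ to $\hat{\mathcal{B}}_{k+1}$ is done by bisecting every subregion $B\in\mathcal{B}_k$ into $B_1\cup B_2$; since $B_1\cup B_2 = B$ (up to the splitting hyperplane, which is a null set and is handled by the open/closed-face conventions already in force in the paper), we have $\bigcup_{B\in\hat{\mathcal{B}}_{k+1}}B=\bigcup_{B\in\mathcal{B}_k}B$, hence the inclusion is preserved and moreover $X^*$ is still covered. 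Then $\mathcal{B}_{k+1}$ is obtained from $\hat{\mathcal{B}}_{k+1}$ by deleting certain subregions in the discarding test; deletion can only shrink the union, so $\bigcup_{B\in\mathcal{B}_{k+1}}B\subseteq\bigcup_{B\in\hat{\mathcal{B}}_{k+1}}B=\bigcup_{B\in\mathcal{B}_k}B$, which gives the chain of inclusions. It remains to check that deletion does not lose any Pareto point: if $B$ is discarded, then by Theorem \ref{th:3.11} the set $B$ contains no Pareto solution of (P), so $X^*\cap B=\emptyset$; therefore every $x^*\in X^*$, which lay in some $B\in\hat{\mathcal{B}}_{k+1}$, actually lies in a subregion that survives into $\mathcal{B}_{k+1}$, proving $X^*\subseteq\bigcup_{B\in\mathcal{B}_{k+1}}B$.

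The main obstacle — more a bookkeeping nuisance than a genuine difficulty — is reconciling the fact that subregions are defined in the paper as \emph{open} boxes $\{x:\underline{x}_i<x_i<\overline{x}_i\}$ with the claim that bisection of $B$ yields $B_1\cup B_2$ equal to $B$: literally, the splitting face $\{x_j=c(B)_j\}$ is omitted. This is the point where I would either invoke the convention (implicit in Algorithm 1 and in \cite{ref18}) that the boundary faces are attached to one of the children, or else state the inclusions for the closures $\overline{B}$, noting that $X^*\subseteq\Omega$ and that a Pareto point lying on a splitting face is simply assigned to whichever child retains that face. Once that convention is pinned down, the argument is a routine two-line induction; the substantive mathematical input is entirely carried by Theorem \ref{th:3.11}, which guarantees the soundness of the discarding test.
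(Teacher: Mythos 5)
Your argument is correct and follows exactly the route the paper intends: the paper's own proof is a one-liner stating the result "is guaranteed by Theorem \ref{th:3.11} and the way $\mathcal{B}_k$ is constructed," which is precisely your induction (bisection preserves the union, the discarding test removes only subregions free of Pareto solutions). Your extra remark about the open-box convention at the splitting face is a careful touch the paper glosses over, but it does not change the substance.
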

\begin{proof}
This conclusion is guaranteed by Theorem \ref{th:3.11} and the way $\mathcal{B}_k$ is constructed.
\end{proof}

Based on Lemma \ref{le:4.7}, we have the following theorem.
\begin{theorem}\label{th:4.8}
Let $\{\mathcal{B}_k\}_{k\in\mathbb{N}}$ be a sequence of subregion collections generated by PBB-MOEA. For each Pareto solution $x^*\in X^*$, for all iteration counters $k\in\mathbb{N}$, there exists a subregion $B(x^*,k)\in\mathcal{B}_k$, such that
$x^*\in B(x^*,k)$. Furthermore, $X^* \subset \bigcap\limits_{k=0}^{\infty}\bigcup\limits_{x\in X^*}B(x,k).$
\end{theorem}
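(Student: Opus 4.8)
The plan is to derive Theorem~\ref{th:4.8} as an almost immediate consequence of Lemma~\ref{le:4.7} together with the discarding-test guarantee in Theorem~\ref{th:3.11}. First I would fix an arbitrary Pareto solution $x^*\in X^*$ and an arbitrary iteration counter $k\in\mathbb{N}$, and argue by induction on $k$ that $x^*$ survives in some subregion of $\mathcal{B}_k$. The base case is trivial: $\mathcal{B}_0=\{[a,b]\}$ and $x^*\in[a,b]$ since $X^*\subseteq\Omega\subseteq[a,b]$, so we set $B(x^*,0)=[a,b]$. For the inductive step, suppose $x^*\in B(x^*,k)\in\mathcal{B}_k$. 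In the construction of $\mathcal{B}_{k+1}$, the subregion $B(x^*,k)$ is bisected into two children (line~4 of Algorithm~4), and $x^*$ lies in at least one of them, say $B'$. The only way $B'$ fails to enter $\mathcal{B}_{k+1}$ is if it is removed by the discarding test (\texttt{DiscardingTest}); but Theorem~\ref{th:3.11} guarantees that a discarded subregion contains no Pareto solution of (P), contradicting $x^*\in B'$. Hence $B'\in\mathcal{B}_{k+1}$ and we set $B(x^*,k+1)=B'$, completing the induction.

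Once the existence of $B(x^*,k)$ with $x^*\in B(x^*,k)\in\mathcal{B}_k$ is established for every $k$, the final inclusion follows by set-theoretic bookkeeping. For each fixed $k$ and each $x\in X^*$ we have $x\in B(x,k)\subseteq\bigcup_{x'\in X^*}B(x',k)$, so $X^*\subseteq\bigcup_{x\in X^*}B(x,k)$; since this holds for every $k\in\mathbb{N}$, we obtain $X^*\subseteq\bigcap_{k=0}^{\infty}\bigcup_{x\in X^*}B(x,k)$, which is the claimed statement. One subtlety worth addressing explicitly is that there may be a branching-coordinate ambiguity or a tie when a subregion is split, so $B(x^*,k)$ need not be unique; the statement only asserts existence, so it suffices to pick any child containing $x^*$ (e.g.\ via a fixed tie-breaking rule, consistent with the $j=\min\{\arg\max_i w_i\}$ convention used in Algorithm~1), and I would note this in one line rather than dwelling on it.

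The main obstacle is not really a mathematical difficulty but a matter of citing the right ingredients cleanly: one must be careful that Theorem~\ref{th:3.11} is stated for the \emph{improved} overall lower bound set produced by Algorithm~2, and in the non-repair branches of Algorithm~4 only the elite subregions get this improved bound while the rest keep the plain bound \eqref{E:3.3}; however, by Remark~\ref{re:3.3} the plain lower bound is itself an overall lower bound set, so Theorem~\ref{th:3.11} applies uniformly to \emph{every} subregion that could be discarded, regardless of which branch produced its lower bound. I would make this remark explicit in the proof so that the invocation of Theorem~\ref{th:3.11} is unambiguous. With that caveat handled, the proof is short and the write-up would be just a few lines: state the induction, invoke Theorem~\ref{th:3.11} for the inductive step exactly as Lemma~\ref{le:4.7} does, and then take the intersection over $k$.
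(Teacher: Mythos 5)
Your proposal is correct and follows essentially the same route as the paper: the paper's proof also rests on Lemma~\ref{le:4.7} (which is itself justified by Theorem~\ref{th:3.11} and the bisection construction), establishes the first claim by induction on $k$, and obtains the intersection statement from it. Your write-up merely spells out the induction and the applicability of Theorem~\ref{th:3.11} to both plain and improved lower bound sets in more detail than the paper does.
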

\begin{proof}
The existence of $B(x^*,k)$ is generated by Lemma \ref{le:4.7}. The first conclusion can be proved by mathematical induction. The second conclusion is derived from the first one.
\end{proof}

\begin{theorem}\label{th:4.9}
Let $\{\mathcal{B}_k\}_{k\in\mathbb{N}}$ be the sequence of subregion collections generated by PBB-MOEA. If $\lim\limits_{k\rightarrow\infty}w_k(B)_i=0$ for $i\in\{1,\ldots,m\}$ and $B\in\mathcal{B}_k$, then there must exist a subcollection $\mathcal{B}^*_k\subset\mathcal{B}_k$ for all $k\in\mathbb{N}$, such that the following hold:
\begin{align}
&\lim\limits_{k\rightarrow \infty}d_H(X^*,\mathcal{B}^*_k)=0,\label{E:4.3}\\
&\lim\limits_{k\rightarrow \infty}d_H(F(X^*),\mathcal{U}(\mathcal{B}^*_k))=0,\label{E:4.4}\\
&\lim\limits_{k\rightarrow \infty}d_H(F(X^*),\mathcal{L}(\mathcal{B}^*_k))=0,\label{E:4.5}
\end{align}
where $\mathcal{U}(\mathcal{B}^*_k):=\bigcup_{B\in\mathcal{B}^*_k}u(B)$ and $\mathcal{L}(\mathcal{B}^*_k):=\bigcup_{B\in\mathcal{B}^*_k}l(B)$.

\end{theorem}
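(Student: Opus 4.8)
The strategy is to build $\mathcal{B}^*_k$ as the subcollection of subregions in $\mathcal{B}_k$ that actually contain Pareto solutions of (P), i.e. $\mathcal{B}^*_k := \{B \in \mathcal{B}_k : B \cap X^* \neq \emptyset\}$, and then to verify the three limits in turn by combining the covering property (Lemma~\ref{le:4.7} and Theorem~\ref{th:4.8}) with the two pointwise convergence lemmas (Lemmas~\ref{le:4.1}, \ref{le:4.2}) and the hypothesis $\lim_{k\to\infty} w_k(B)_i = 0$. First I would record that $\mathcal{B}^*_k$ is nonempty for every $k$: by Theorem~\ref{th:4.8}, for any fixed $x^* \in X^*$ there is a subregion $B(x^*,k) \in \mathcal{B}_k$ with $x^* \in B(x^*,k)$, so $B(x^*,k) \in \mathcal{B}^*_k$. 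Moreover $X^* \subseteq \bigcup_{B\in\mathcal{B}^*_k} B$, which will be the source of one direction of each Hausdorff estimate.

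For \eqref{E:4.3}, I would split the Hausdorff distance into its two directed parts. For $d_h(X^*,\mathcal{B}^*_k)$: given $x^* \in X^*$, the subregion $B(x^*,k)$ contains $x^*$ and has width tending to $0$, so $d(x^*, B(x^*,k)) = 0$ trivially, and this bounds $d(x^*,\mathcal{B}^*_k)$ — actually the relevant quantity is the distance to a representative point (say the midpoint $c(B)$), which is at most $\tfrac12 \omega_{k,(2)} \to 0$. For the reverse direction $d_h(\mathcal{B}^*_k, X^*)$: given $B \in \mathcal{B}^*_k$, pick $\hat{x} \in B \cap X^*$; then any point of $B$ (in particular $c(B)$) is within $\tfrac12\omega_{k,(2)}$ of $\hat{x} \in X^*$, so $d(c(B), X^*) \le \tfrac12\omega_{k,(2)} \to 0$ uniformly in $B$. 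Taking suprema and then the limit gives \eqref{E:4.3}.

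For \eqref{E:4.4} and \eqref{E:4.5}, the arguments are parallel to the proof of the earlier theorem on $d_H(F(X^*_k),\mathcal{U}_k)$ and $d_H(F(X^*_k),\mathcal{L}_k)$, but now restricted to the subcollection $\mathcal{B}^*_k$ whose members each genuinely contain a Pareto point. For \eqref{E:4.5}: given $x^* \in X^*$, take $B(x^*,k) \in \mathcal{B}^*_k$ containing it; Lemma~\ref{le:4.1} (together with the remark that its conclusion persists for improved lower bound sets) gives $d(F(x^*), l(B(x^*,k))) \to 0$, hence $d(F(x^*), \mathcal{L}(\mathcal{B}^*_k)) \to 0$. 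Conversely, for $l \in \mathcal{L}(\mathcal{B}^*_k)$ there is $B \in \mathcal{B}^*_k$ with $l = l(B)$ and some $\hat{x} \in X^*_B \subseteq X^*$ (using $B \cap X^* \neq \emptyset$ and that a Pareto point of (P) lying in $B$ is also a local Pareto point of $B$), and Lemma~\ref{le:4.1} gives $d(l(B), F(\hat{x})) \to 0$, so $d(l, F(X^*)) \to 0$; the bounds are uniform in $B$ because they depend only on $\omega_{k,(1)}, \omega_{k,(\infty)}$ and the Lipschitz constants. Combining the two directed distances yields \eqref{E:4.5}, and the identical scheme with Lemma~\ref{le:4.2} in place of Lemma~\ref{le:4.1} yields \eqref{E:4.4}.

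**Main obstacle.** The delicate point is the passage between $X^*$ (the global Pareto set of (P)) and the $X^*_B$'s (the \emph{local} Pareto sets, per the revised definition in Section~3): I need $B \cap X^* \neq \emptyset$ to actually furnish a point in $X^*_B$ on which Lemmas~\ref{le:4.1}–\ref{le:4.2} can be invoked, and conversely I must ensure that points of $X^*_B$ used in the reverse direction really approximate points of $X^*$ rather than merely local optima — here the width going to zero is what forces local and global Pareto points to coincide in the limit, so this is where the hypothesis $\lim_k w_k(B)_i = 0$ does the essential work. A secondary subtlety is making the convergence uniform over $\mathcal{B}^*_k$ rather than merely pointwise in each fixed subregion: this rests on the fact that in \eqref{E:3.3} the only $B$-dependence of the lower-bound error is through $\omega_{k,(1)}$ and $\omega_{k,(\infty)}$, which by the parallel bisection in PBB-MOEA are the same for all $B \in \mathcal{B}_k$, so one should state explicitly that all subregions at iteration $k$ share a common width before taking the supremum.
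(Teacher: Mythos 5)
Your proposal is correct and follows essentially the same route as the paper: you take $\mathcal{B}^*_k$ to be the subregions containing Pareto points (the paper's $\bigcup_{x\in X^*}B(x,k)$ from Theorem~\ref{th:4.8}), split each Hausdorff distance into its two directed parts, and apply Lemmas~\ref{le:4.1} and~\ref{le:4.2} together with the vanishing-width hypothesis. Your extra remarks on uniformity over $\mathcal{B}^*_k$ and on a global Pareto point in $B$ being a point of $X^*_B$ only make explicit what the paper's proof leaves implicit.
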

\begin{proof}
Setting $\mathcal{B}^*_k=\bigcup_{x\in X^*}B(x,k)$. By Theorem \ref{th:4.8}, it is easy to see that %for each $x^*\in X^*$,
\begin{align*}
\lim\limits_{k\rightarrow \infty}d(x^*,\mathcal{B}^*_k) =0 \quad \forall x^*\in X^*,
\end{align*}
this entails that $\lim\limits_{k\rightarrow \infty}d_h(X^*,\mathcal{B}^*_k)=0$. On the other hand, for each $x\in\mathcal{B}_k^*$, there exists a subregion $B(\hat{x}^*,k)\in\mathcal{B}_k$, such that $x\in B(\hat{x}^*,k)$. Therefore, we have
\begin{align*}
0\leq\lim\limits_{k\rightarrow \infty}d(x,X^*)\leq\lim\limits_{k\rightarrow \infty}d(x,\hat{x}^*)=\lim\limits_{k\rightarrow \infty}\sqrt{\sum_{i=1}^m w_k(B(\hat{x}^*,k))_i^2} =0,
\end{align*}
which means that $\lim\limits_{k\rightarrow \infty}d_h(\mathcal{B}_k^*, X^*)=0$. This proves \eqref{E:4.3}.

Now, we turn to the relation between the lower bound set of $\mathcal{B}^*_k$ and Pareto front. By the construction of $\mathcal{B}^*_k$ and Lemma \ref{le:4.2}, we have the following inequality
\begin{align*}
0\leq d(F(x^*),\mathcal{U}(\mathcal{B}^*_k))\leq d(F(x^*),u(B(x^*,k)))=0\quad \forall x^*\in X^*.
\end{align*}
On the other hand, for each $u\in \mathcal{U}(\mathcal{B}^*_k)$, there exists a point $\hat{x}\in X^*$, such that $u(B(\hat{x},k))=u$. Using Lemma \ref{le:4.2} again, we obtain
\begin{align*}
0\leq d(u,F(X^*))\leq d(u(B(\hat{x},k)),F(\hat{x}))=0\quad \forall u\in \mathcal{U}(\mathcal{B}^*_k).
\end{align*}
This shows \eqref{E:4.4}. The proof of \eqref{E:4.5} is similar to the above proof.
\end{proof}

Theorem \ref{th:4.9} shows that, at each iteration, there must exist a subcollection of the subregion collection, such that a sequence constructed by the subcollection per iteration converges to the Pareto set. In practice, it is hard to determine such a subcollection for PBB-MOEA, thus we try to find the elite subregions to simulate the subcollection. Intuitively, the collection of elite subregions will be very similar to the subcollection if the width of subregions is small enough.

\section{Experimental Results}

PBB-MOEA has been implemented in Python 3.8 with fundamental packages like numpy, scipy and multiprocessing. All experiments have been done on a computer with Intel(R) Core(TM) i7-10700 CPU and 32 Gbytes RAM on operation system WINDOWS 10 PROFESSIONAL.

We hybridize the branch and bound method with MOEA/D-DE\textsuperscript{\cite{ref7}} and NSGA-II\textsuperscript{\cite{ref10}}, respectively. The resulting algorithms are called PBB-MOEA/D and PBB-NSGAII, respectively. For all test instances, the population size of two mini MOEAs is set to 10 and the maximum number of generations is set to 20. Other parameter settings of mini MOEAs can be found in \cite{ref7,ref10}.

{\bf Test instance 5.1}
Consider the following optimization problem from \cite{ref23}:
$$F(x)=
\begin{pmatrix}
x_1\\
\min(|x_1-1|,1.5-x_1)+x_2+1
\end{pmatrix}\quad with\quad x_i\in[0,2],\;i=1,2.$$
The Pareto optimal set of this problem is disconnected, which leads to its Pareto front consisting of two segments: one joins the points (0, 2) and (1, 1), and the other, the points (1.5, 1) and (2, 0.5).

Figure 1 shows the results of PBB-MOEA/D on this instance. After 12 iterations, PBB-MOEA/D outputs 96 subregions covering the Pareto set. These subregions are visualized as grey boxes in Figure 2(a). In Figure 2(b), a total of 479 upper bounds (blue dots) found by the mini MOEA and 96 lower bounds (red dots) enclose the Pareto front from above and below.
\begin{figure}[H]
  \centering
  \subfigure[]{
    \includegraphics[width=0.45\textwidth]{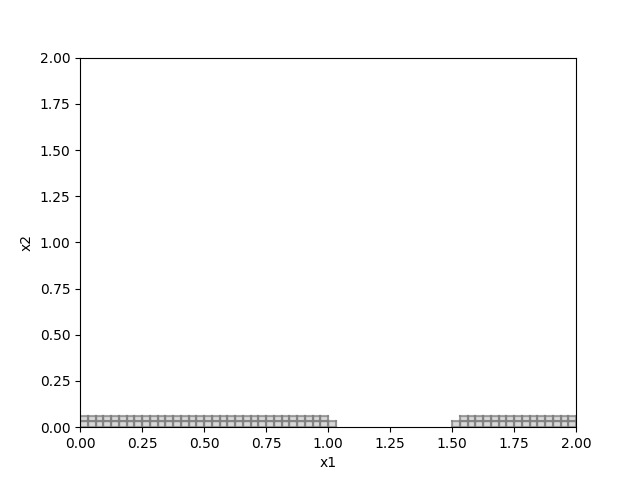}}
  \subfigure[]{
    \includegraphics[width=0.5\textwidth]{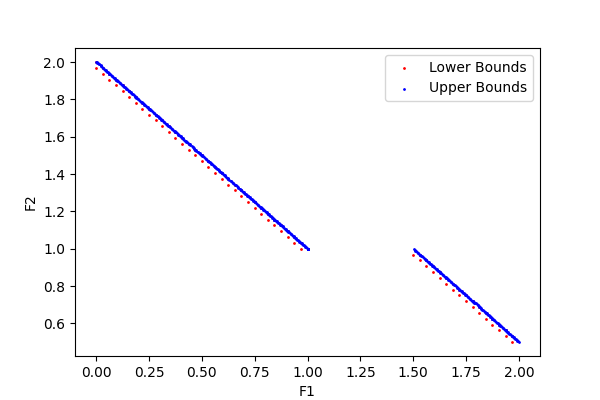}}
  \caption{The results of test instance 5.1.: (a) A covering of the PS of test instance 5.1 generated by PBB-MOEA/D; (b) Lower and upper bounds for the PF of test instance 5.1 generated by PBB-MOEA/D.}
\end{figure}

In the following, we use two instances to illustrate the difference between PBB-MOEA/D and a basic branch and bound (BB) algorithm. The basic BB uses the midpoints of subregions to calculate upper bounds and uses \eqref{E:3.3} to calculate lower bounds. Here the number of subregions (BNV) is a performance measure of tightness of bounds.

{\bf Test instance 5.2} Consider the biobjective optimization problem from \cite{ref32}:
$$F(x)=
\begin{pmatrix}
1-e^{-\sum\limits_{i=1}^{n}(x_i-\frac{1}{\sqrt{n}})^2}\\
1-e^{-\sum\limits_{i=1}^{n}(x_i+\frac{1}{\sqrt{n}})^2}
\end{pmatrix}\quad with\quad x_i\in[-2,2],\;n=3.$$

Figure 3 gives a visual comparison of the two algorithms on test instance 5.2. Figure 3(a) shows the evolution of BNVs versus the number of iterations, although the basic BB algorithm can handle this instance well, the curve of PBB-MOEA/D mostly lie to the below of the basic BB algorithm, indicating that the performance of PBB-MOEA/D is still slightly better. Figure 3(b) presents the upper bounds found by two algorithms in the 9-th iteration, respectively. It can be easily seen that the upper bounds found by PBB-MOEA/D (blue dots) mostly dominates the ones found by basic BB algorithm (red cubes), which implies that blue dots are tighter. Figures 3(c) and 3(d) illustrate the influence of different upper and lower bounds on the discarding test. PBB-MOEA/D finds a total of 884 upper bounds, which is obviously more than upper bounds in Figure 3(d). As a result, PBB-MOEA/D is able to remove more subregions in the same discarding test.

\begin{figure}[H]
  \centering
  \subfigure[]{
    \includegraphics[width=0.45\textwidth]{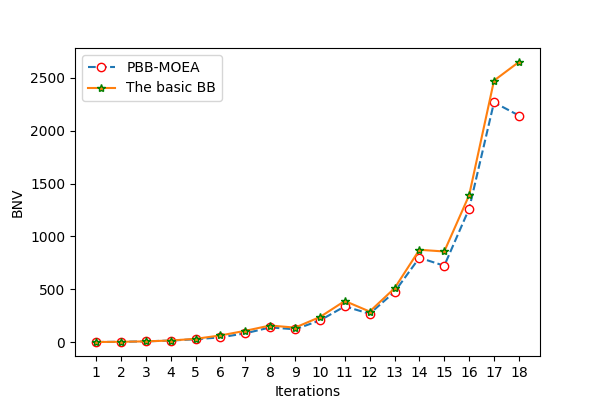}}
  \subfigure[]{
    \includegraphics[width=0.43\textwidth]{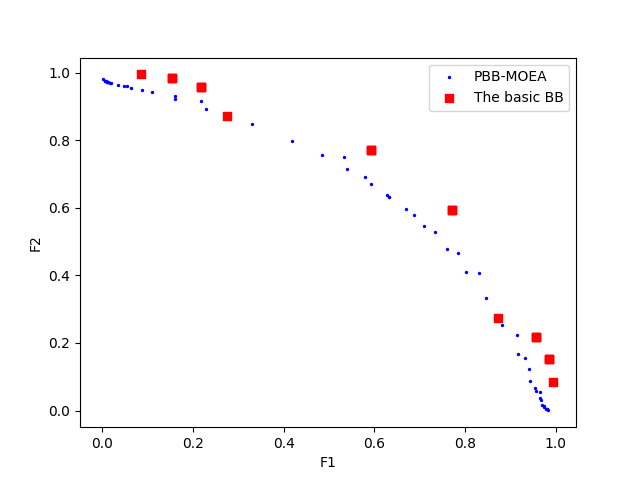}}
  \subfigure[]{
    \includegraphics[width=0.45\textwidth]{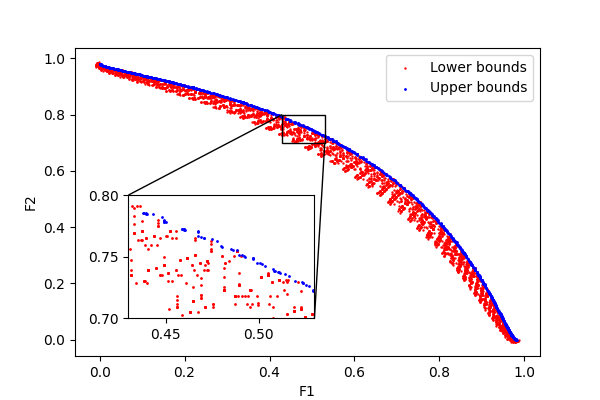}}
  \subfigure[]{
    \includegraphics[width=0.45\textwidth]{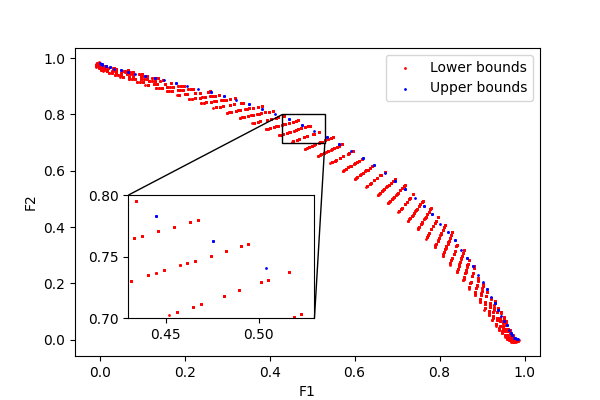}}
  \caption{PBB-MOEA/D versus the basic BB algorithm on test instance 5.2.:(a) The curves of BNVs of PBB-MOEA and the basic BB algorithm; (b) Upper bounds generated by two algorithms at the 9-th iteration; (c) Bounds generated by PBB-MOEA/D; (d) Bounds generated by the basic BB algorithm.}
\end{figure}

{\bf Test instance 5.3} Consider the biobjective optimization problem from \cite{ref10} and the dimension of the variable space $n\in\mathbb{N}$ can be arbitrarily chosen:
$$F(x)=
\begin{pmatrix}
 x_1\\
g(x)(1-(\frac{f_1(x)}{g(x)})^2)
\end{pmatrix}$$
where $g(x)=1+\frac{9(\sum\limits_{i=2}^n x_i)}{n-1}$ and $x_i\in[0,1],\;n\in\mathbb{N}.$ This instance is well-known ZDT2 which has a nonconvex Pareto front, and its Pareto set is $x_1\in[0,1]$, $x_i=0$, $i\geq2$.

The illustrations in Figure 4 show the results for this instance with $n=10$. Figures 4(a) and 4(b) show that PBB-MOEA/D can handle this instance very well. In Figure 4(c), the BNVs grow exponentially when using the basic BB algorithm, which means that adequate bounds are not found, resulting in a failure of solving this instance. In contrast, with the help of the MOEA, PBB-MOEA/D can reduce the number of subregions successfully, which indicates its superior performance.

\begin{figure}[H]
  \centering
  \subfigure[]{
    \includegraphics[width=0.30\textwidth]{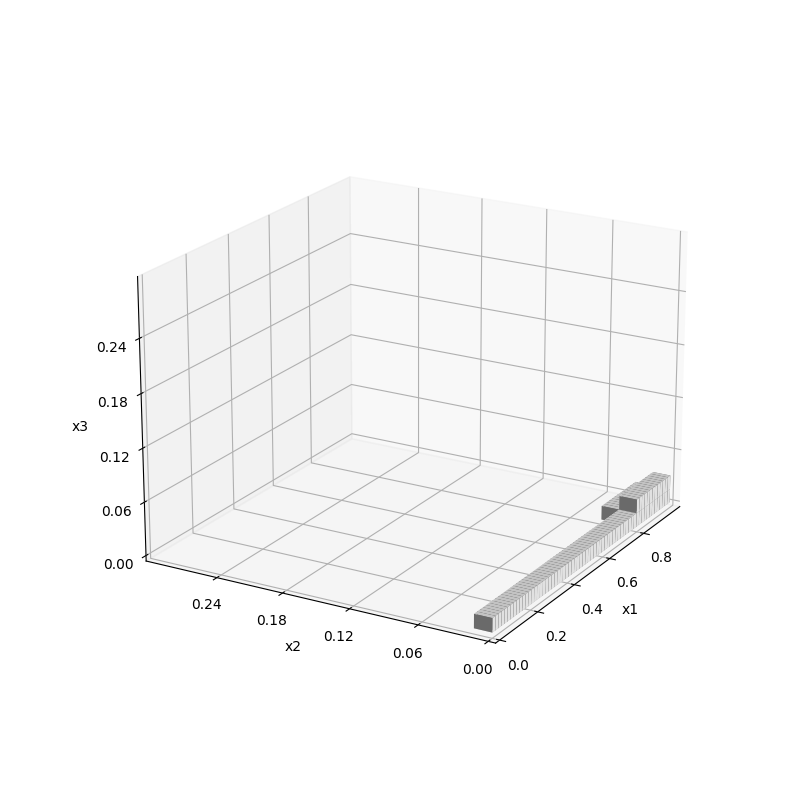}}
  \subfigure[]{
    \includegraphics[width=0.32\textwidth]{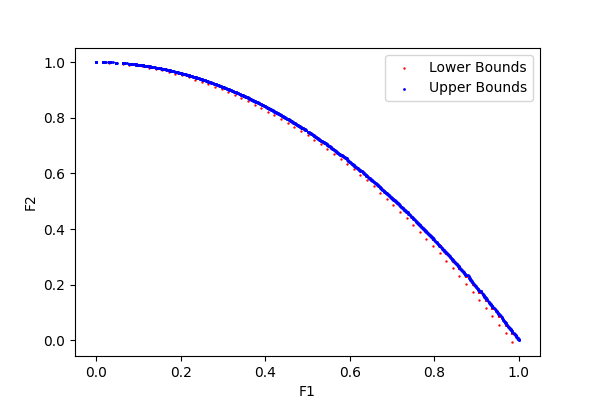}}
  \subfigure[]{
    \includegraphics[width=0.33\textwidth]{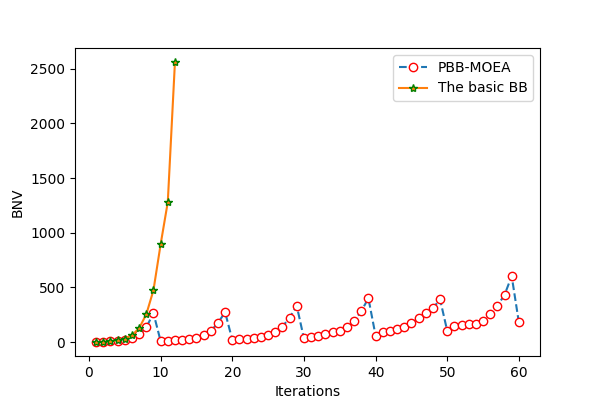}}
  \caption{The results of test instance 5.3 with $n=10$: (a) A covering of the PS of test instance 5.3 generated by PBB-MOEA/D; (b) Lower and upper bounds for the PF of test instance 5.3 generated by PBB-MOEA/D; (c) The curves of BNVs of PBB-MOEA and the basic BB algorithm.}
\end{figure}

One may ask that MOEAs can also handle the above instances well, is it necessary to consider the hybridization of the branch and bound method and MOEAs? In the following, we compare PBB-MOEA with two well-known MOEAs: NSGA-II and MOEA/D-DE.

{\bf Test instance 5.4} Consider the biobjective optimization problem from \cite{ref33}:
$$F(x)=
\begin{pmatrix}
\sum\limits_{i=1}^n x_i\\
1-\sum\limits_{i=1}^n (1-w_i(x_i))
\end{pmatrix}$$
where $w_j(z)=
          \left\{
          \begin{array}{ll}
              0.01e^{-(z/20)^{2.5}} & \mbox{if }j=1,2\\
              0.01e^{-(z/15)} & \mbox{if }j>3
          \end{array}\right.
          \label{pan}$ and $x_i\in[0,40]$, $n\in\mathbb{N}$. \\
          This test instance is a multimodal problem, i.e., different Pareto solutions may have the same function value.

Figure 5 provides the comparison of three algorithms on test instance 5.4 with $n=3$. For both NSGA-II and MOEA/D-DE, the quality of a solution set is evaluated in the objective space, the distribution of solutions in the variable space does not received enough attention. As a result, though they are able to approximate the whole Pareto front well in Figures 5(d) and 5(e), only a part of the Pareto set is approximated by solutions they find which are shown in Figures 5(a) and 5(b). In contrast, the branch and bound method focus on solving the problem from the variable space, thus Figures 5(c) and 5(f) show that PBB-MOEA/D can approximate not only the Pareto front but also the whole Pareto set.
\begin{figure}
  \centering
  %\vfill%分行命令
  \subfigure[]{
    \includegraphics[width=0.32\textwidth]{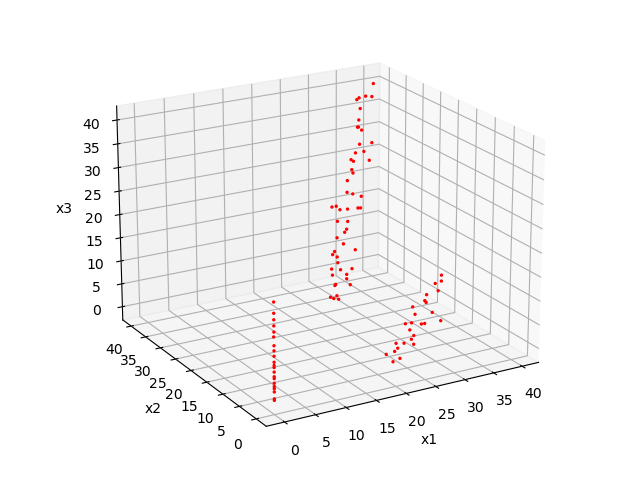}}
  \subfigure[]{
    \includegraphics[width=0.32\textwidth]{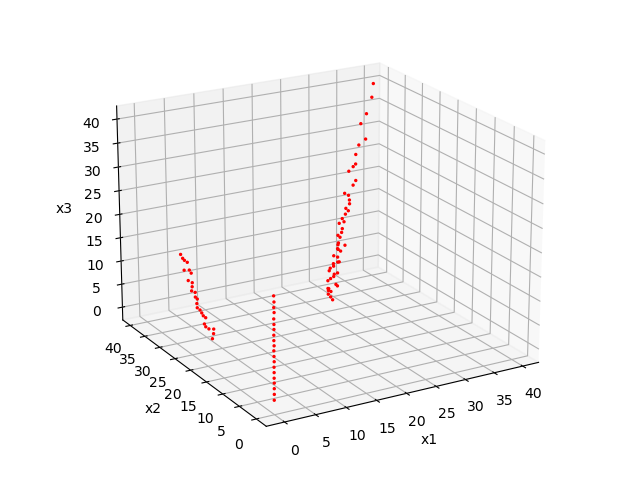}}
  \subfigure[]{
    \includegraphics[width=0.32\textwidth]{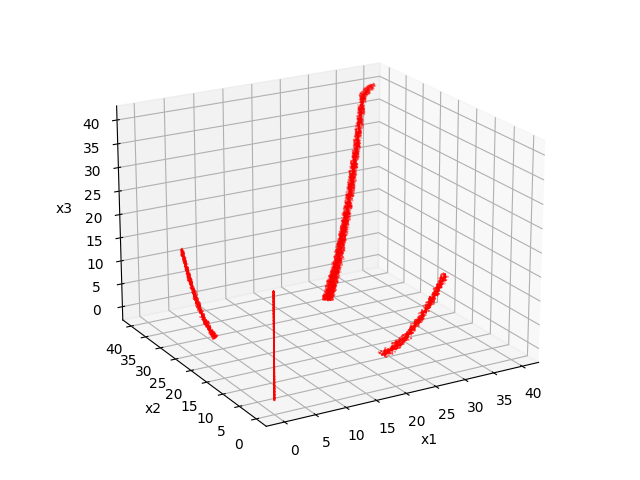}}

  \subfigure[PF of NSGA-II]{
    \includegraphics[width=0.32\textwidth]{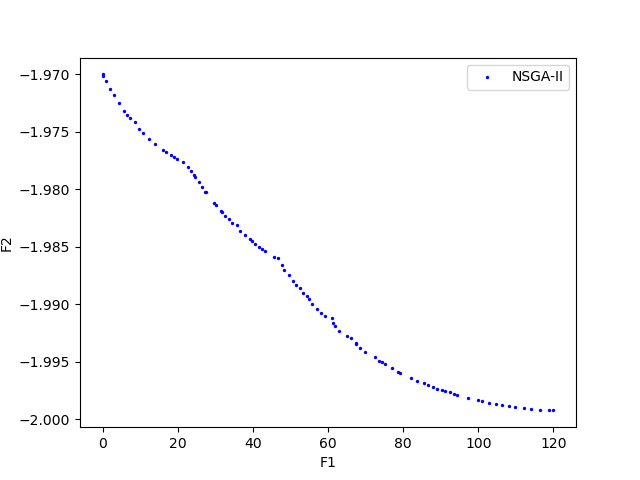}}
  \subfigure[]{
    \includegraphics[width=0.32\textwidth]{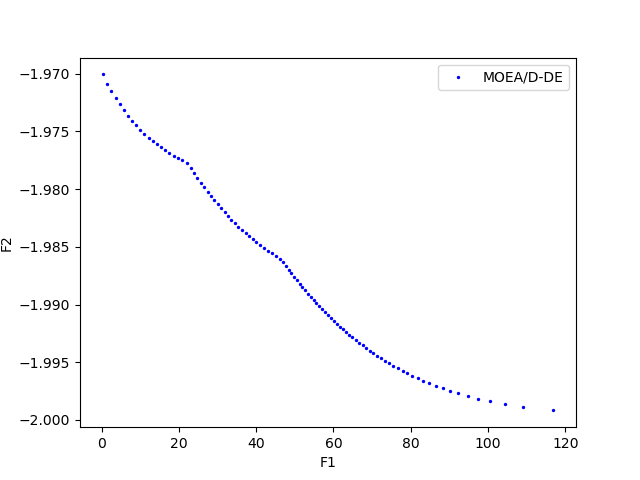}}
  \subfigure[]{
    \includegraphics[width=0.32\textwidth]{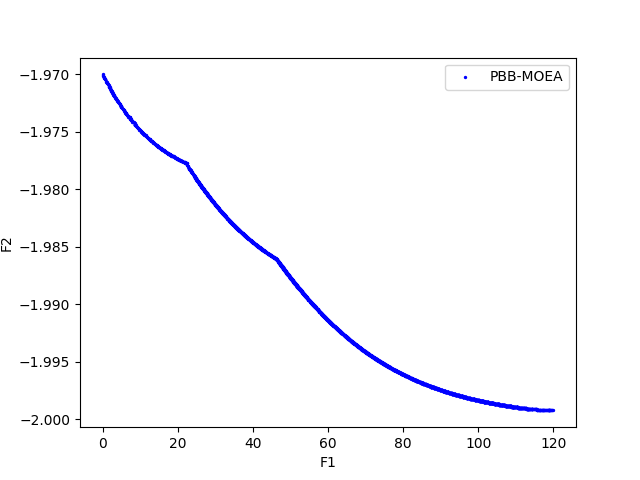}}
  \caption{PBB-MOEA/D versus two MOEAs on test instance 5.4 with $n=3$: (a) The solutions generated by NSGA-II; (b) The solutions generated by MOEA/D-DE; (c) The solutions generated by PBB-MOEA/D; (d) The upper bounds generated by NSGA-II; (e) The upper bounds generated by MOEA/D-DE; (f) The upper bounds generated by PBB-MOEA/D.}
\end{figure}

In next test instance we will validate the contribution of the branch and bound method to the search capability of MOEAs.

{\bf Test instance 5.5} Consider the three-objective optimization problem from \cite{ref34}:
$$F(x)=
\begin{pmatrix}
0.5(x_1^2+x_2^2)+\sin(x_1^2+x_2^2)\\
\frac{(3x_1-2x_2+4)^2}{8}+\frac{(x_1-x_2+1)^2}{27}+15\\
\frac{1}{x_1^2+x_2^2+1}-1.1\exp(-x_1^2-x_2^2)
\end{pmatrix}\quad with\quad x_i\in[-3,3],\;i=1,2.$$

The comparison results are showed as Figure 6. Compared to two pure MOEAs, two hybrid algorithms are able to approximate the Pareto set and Pareto front perfectly. It is worth noting that the hybrid algorithms still outperform the two pure MOEAs when only the mini MOEAs are used, indicating that the branch and bound method enhances MOEAs' search capability, which is achieved by limiting the search region.

\begin{figure}[H]
  \subfigure[]{
    \includegraphics[width=0.23\textwidth]{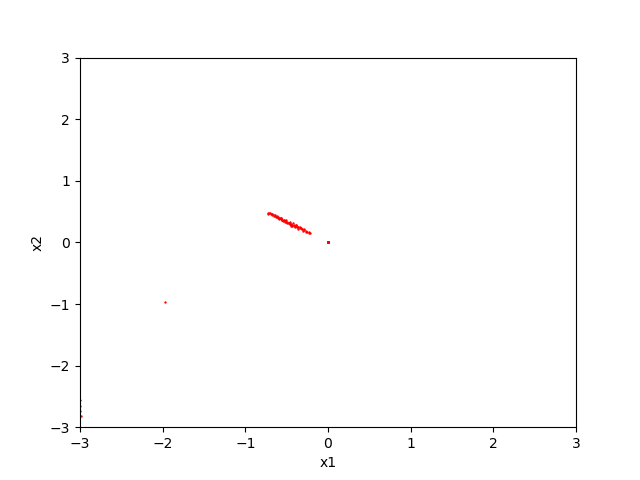}}
  \subfigure[]{
    \includegraphics[width=0.23\textwidth]{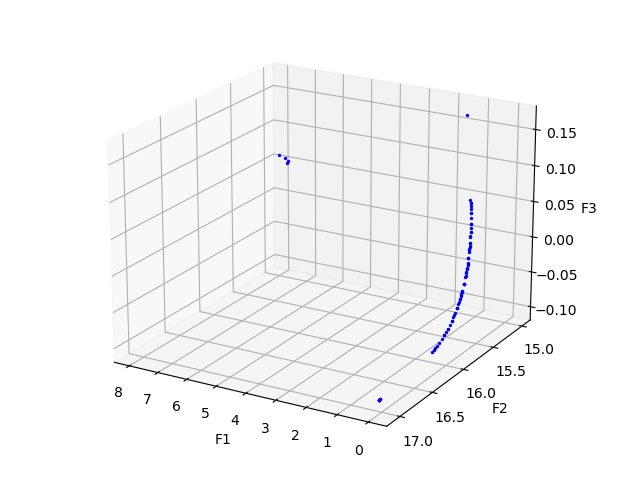}}
    \subfigure[]{
    \includegraphics[width=0.23\textwidth]{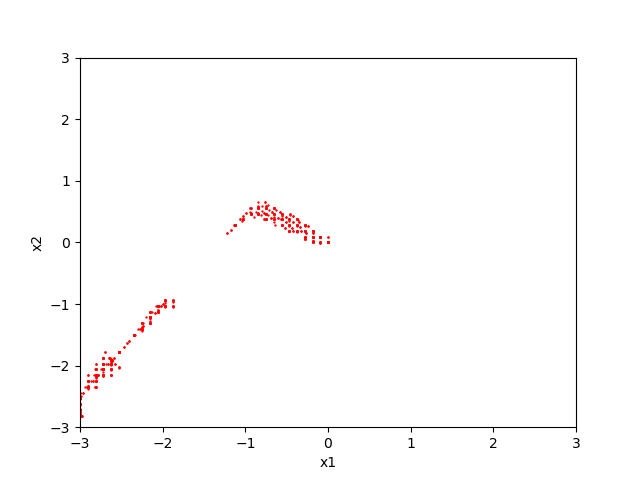}}
  \subfigure[]{
    \includegraphics[width=0.23\textwidth]{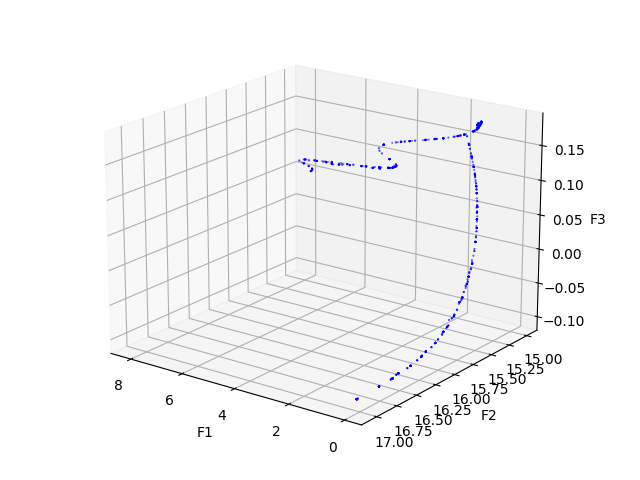}}

  \subfigure[]{
    \includegraphics[width=0.23\textwidth]{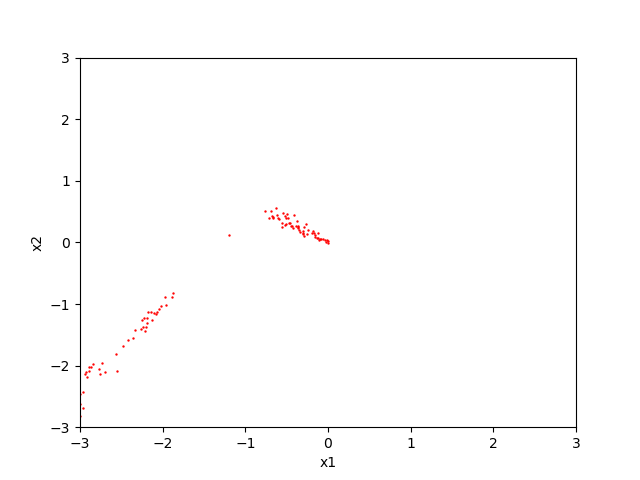}}
  \subfigure[]{
    \includegraphics[width=0.23\textwidth]{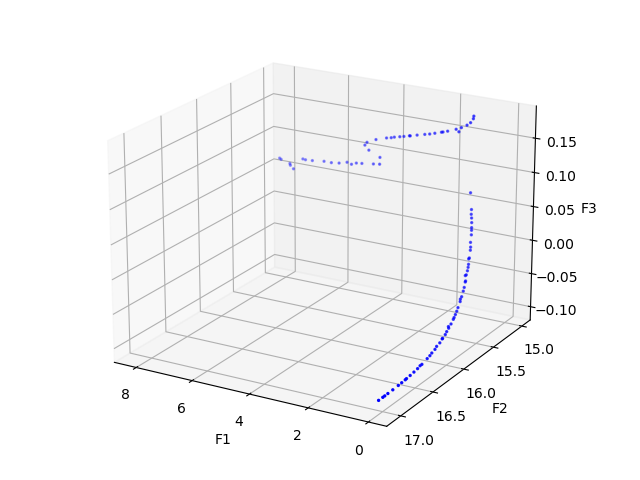}}
  \subfigure[]{
    \includegraphics[width=0.23\textwidth]{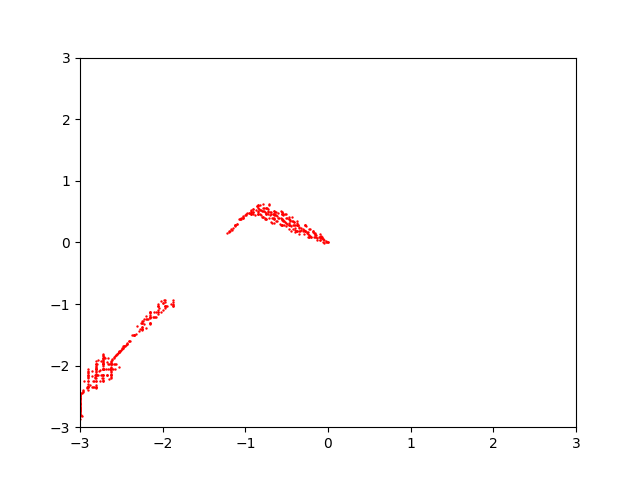}}
  \subfigure[]{
    \includegraphics[width=0.23\textwidth]{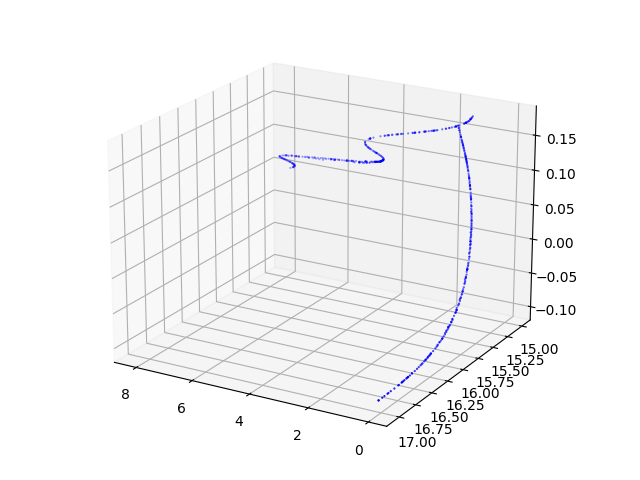}}
    \caption{PBB-MOEA versus MOEA/D-DE and NSGA-II on test instance 5.5.: (a) The solutions generated by MOEA/D-DE; (b) Upper bounds generated by MOEA/D-DE; (c) The solutions generated by PBB-MOEA/D; (d) Upper bounds generated by PBB-MOEA/D; (e) The solutions generated by NSGA-II; (f) Upper bounds generated by NSGA-II; (g) The solutions generated by PBB-NSGAII; (h) Upper bounds generated by PBB-NSGAII.}
  \end{figure}

Finally, we consider a multiobjective optimization problem with nonconvex constraints.

{\bf Test instance 5.6} Consider the constrained optimization problem from \cite{ref10}:
$$F(x)=
\begin{pmatrix}
x_1\\
x_2
\end{pmatrix}$$
subject to the constraints
\begin{align*}
&g_1=x_1^2+x_2^2-1-0.1\cos(16\arctan(x_1/x_2))\geq0,\\
&g_2=0.5-(x_1-0.5)^2-(x_2-0.5)^2\geq0,\\
&0\leq x_1\leq\pi,\\
&0\leq x_2\leq\pi.
\end{align*}

We compare PBB-MOEA/D with MOEA/D-DE and the basic BB algorithm on this instance. The size of population of MOEA/D-DE is 200 and the maximal number of generations is 300, and the penalty coefficient $\rho$ is set to 1. The \emph{natural interval extension} mentioned in \cite{ref27} is employed in the feasibility test.

Since $F$ is the identity, the Pareto optimal set and the Pareto front coincide. Figure 7 provides the comparison of three algorithms on this instance. Unlike in the box constrained cases, the basic BB algorithm does not find enough feasible upper bounds to approximate the complete Pareto front. By contrast, with the mini MOEA, PBB-MOEA/D is able to find more feasible points than the basic BB algorithm does, and it performs similarly to MOEA/D-DE. This also verifies the enhancement of the branch and bound method to the MOEAs' search capability.

\begin{figure}[H]
  \centering
    \subfigure[]{
    \includegraphics[width=0.31\textwidth]{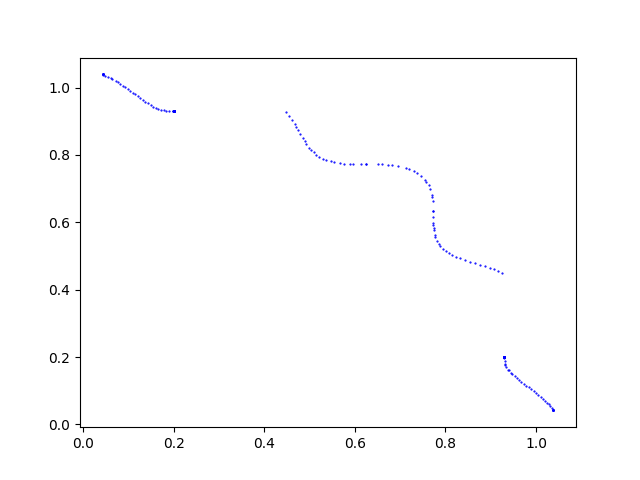}}
  \subfigure[]{
    \includegraphics[width=0.31\textwidth]{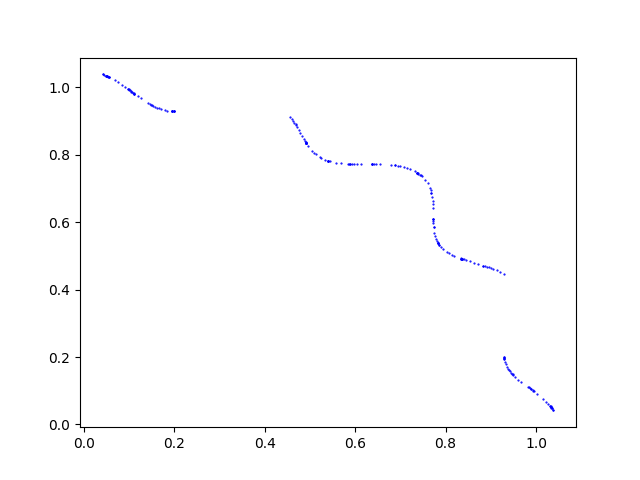}}
  \subfigure[]{
    \includegraphics[width=0.31\textwidth]{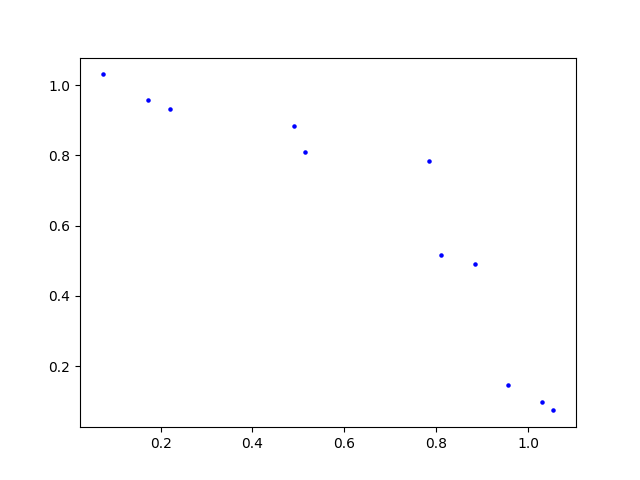}}
  \caption{The comparison of three algorithms on test instance 5.6.: (a) The solutions/feasible upper bounds generated by MOEA/D-DE; (b) The solutions/feasible upper bounds generated by PBB-MOEA/D; (c) The solutions/feasible upper bounds generated by the basic BB algorithm.}
\end{figure}

\section{Conclusions}

An algorithmic framework (PBB-MOEA) which hybrids the branch and bound method with MOEAs is presented for solving nonconvex MOPs. PBB-MOEA brings together the advantages from the branch and bound method and MOEAs. On the one hand, PBB-MOEA has global convergence results and is easy to parallelize which benefits from the branch and bound method. On the other hand, PBB-MOEA is able to obtain tight bounds with the help of the search power of an MOEA, and further the tight bounds can reduce the number of subregions. Numerical experiments show that the resulting hybrid algorithms are more effective in solving multiobjective optimization problems than the branch and bound algorithm or MEOAs.
\begin{acknowledgements}
This work is supported by the Major Program of National Natural Science Foundation of China (Nos. 11991020, 11991024), the General Program of National Natural Science Foundation of China (No. 11971084) and the Natural Science Foundation of Chongqing (No. cstc2019jcyj-zdxmX0016)
\end{acknowledgements}


\begin{thebibliography}{99}

\bibitem{ref24} Androulakis, I.P., Maranas, C.D., Floudas, C.A. $\alpha$BB: A global optimization method for general constrained nonconvex problems. \emph{J. Glob. Optim.}, 7(4): 337-363 (1995)

\bibitem{ref13} Chipperfield, A., Fleming, P. Multi-objective gas turbine engine controller design using genetic algorithms. \emph{IEEE. Trans. Ind. Electron.}, 43(5): 583-587 (1996)

\bibitem{ref14} Coello, C.A.C., Lamont, G.B. Applications of Multi-Objective Evolutionary Algorithms. Singapore: World Scientific, 2004

\bibitem{ref5} Deb, K., Jain, H. An evolutionary many-objective optimization algorithm using reference-point-based nondominated sorting approach, part I: solving problems with box constraints. \emph{IEEE. Trans. Evol. Comput.}, 18(4): 577-601 (2013)

\bibitem{ref9} Deb, K. Multi-Objective Optimization Using Evolutionary Algorithms. John Wiley \& Sons, Inc, 2001

\bibitem{ref10} Deb, K., Pratap, A., Agarwal, S., et al. A fast and elitist multiobjective genetic algorithm: NSGA-II. \emph{IEEE. Trans. Evol. Comput.}, 6(2): 182-197 (2002)

\bibitem{ref17} Dellnitz, M., Sch{\"u}tze, O., Hestermeyer, T. Covering Pareto sets by multilevel subdivision techniques. \emph{J. Optim. Theory. Appl.}, 124(1): 113-136 (2005)

\bibitem{ref25} Ehrgott, M., Shao, L.Z., Sch{\"o}bel, A. An approximation algorithm for convex multi-objective programming problems. \emph{J. Glob. Optim.}, 50(3): 397-416 (2011)

\bibitem{ref21} Eichfelder, G., Kirst, P., Meng, L., et al. A general branch-and-bound framework for continuous global multiobjective optimization. \emph{J. Glob. Optim.}, 80(1): 195-227 (2021)



\bibitem{ref18} Fern{\'a}ndez, J., T{\'o}th, B. Obtaining the efficient set of nonlinear biobjective optimization problems via interval branch-and-bound methods. \emph{Comput. Optim. Appl.}, 42(3): 393-419 (2009)

\bibitem{ref32} Fonseca, C.M., Fleming, P.J. Multiobjective genetic algorithms made easy: selection sharing and mating restriction. First International Conference on Genetic Algorithms in Engineering Systems: Innovations and Applications. IEEE Press, Piscataway, NJ: 45-52 (1995)

\bibitem{ref26} Hillermeier, C. Nonlinear Multiobjective Optimization: A Generalized Homotopy Approach. Springer Science \& Business Media, 2001

\bibitem{ref2} Hillermeier, C., Jahn, J. Multiobjective optimization: survey of methods and industrial applications. \emph{Surv. Math. Ind.}, 11: 1-42 (2005)

\bibitem{ref4} Hwang, C.L., Masud, A.S.M. Multiple Objective Decision Making--Methods and Applications: A State-of-the-art Survey. Springer Science \& Business Media, 2012

\bibitem{ref16} Jahn, J. Multiobjective search algorithm with subdivision technique. \emph{Comput. Optim. Appl.}, 35(2): 161-175 (2006)
\bibitem{ref15} Jaimes, A.L., Coello, C.A.C. Multi-objective evolutionary algorithms: a review of the state-of-the-art and some of their applications in chemical engineering. MULTI-OBJECTIVE OPTIMIZATION: Techniques and Application in Chemical Engineering,  Rangaiah G P, Singapore: World Scientific, 63-92 (2017)
\bibitem{ref6} Jain, H., Deb, K. An evolutionary many-objective optimization algorithm using reference-point based nondominated sorting approach, part II: handling constraints and extending to an adaptive approach. \emph{IEEE. Trans. Evol. Comput.}, 18(4): 602-622 (2013)

\bibitem{ref27} Jones, D.R., Perttunen, C.D., Stuckman, B.E. Lipschitzian optimization without the Lipschitz constant. \emph{J. Optim. Theory. Appl.}, 79(1): 157-181 (1993)

\bibitem{ref31} Klamroth, K., Lacour, R., Vanderpooten, D. On the representation of the search region in multi-objective optimization. \emph{Eur. J. Oper. Res.}, 245(3): 767-778 (2015)

\bibitem{ref7} Li, H., Zhang, Q. Multiobjective optimization problems with complicated Pareto sets, MOEA/D and NSGA-II. \emph{IEEE. Trans. Evol. Comput.}, 13(2): 284-302 (2008)

\bibitem{ref29} Neumaier, A. Interval Methods for Systems of Equations. Cambridge university press, 1990

\bibitem{ref20} Niebling, J., Eichfelder, G. A branch--and--bound-based algorithm for nonconvex multiobjective optimization. \emph{SIAM. J. Optim.}, 29(1): 794-821 (2019)

\bibitem{ref3} Pareto, V. Manual of Political Economy: A Critical and Variorum Edition. OUP Oxford, 2014

\bibitem{ref28} Paulavi{\v c}ius, R., {\v Z}ilinskas, J. Analysis of different norms and corresponding Lipschitz constants for global optimization. \emph{Technol. Econ. Dev. Econ}, 12(4): 301-306 (2006)

\bibitem{ref8} Qi, Y., Ma, X., Liu, F., et al. MOEA/D with adaptive weight adjustment. \emph{Evol. Comput.}, 22(2): 231-264 (2014)

\bibitem{ref33} Sch{\"a}ffler, S., Schultz, R., Weinzierl, K. Stochastic method for the solution of unconstrained vector optimization problems. \emph{J. Optim. Theory. Appl.}, 114(1): 209-222 (2002)

\bibitem{ref30} Skulimowski, A.M.J. Classification and properties of dominating points in vector optimization. In Kleinschmidt P, Radermacher F J, Scweitzer W and Wildermann H editors, Methods of Operations Research 58: 99-112. Frankfurt am Main, Germany: Athenum Verlag, 1989

\bibitem{ref1} Tapia, M.G.C., Coello, C.A.C. Applications of multi-objective evolutionary algorithms in economics and finance: a survey. \emph{Proceedings of 2007 IEEE CEC}: 532-539 (2007)

\bibitem{ref34} Vlennet, R., Fonteix, C., Marc, I. Multicriteria optimization using a genetic algorithm for determining a Pareto set. \emph{Int. J. Syst. Sci.}, 27(2): 255-260 (1996)

\bibitem{ref12} Zhang, Q., Li, H. MOEA/D: A multiobjective evolutionary algorithm based on decomposition. \emph{IEEE. Trans. Evol. Comput.}, 11(6): 712-731 (2007)





\bibitem{ref19} {\v Z}ilinskas, A. A one-step worst-case optimal algorithm for bi-objective univariate optimization. \emph{Optim. Lett.}, 8(7): 1945-1960 (2014)

\bibitem{ref22} {\v Z}ilinskas, A., Gimbutien{\. e}, G. On one-step worst-case optimal trisection in univariate bi-objective Lipschitz optimization. \emph{Commun. Nonlinear. Sci. Numer. Simul.}, 35: 123-136 (2016)

\bibitem{ref23} {\v Z}ilinskas, A., {\v Z}ilinskas, J. Adaptation of a one-step worst-case optimal univariate algorithm of bi-objective Lipschitz optimization to multidimensional problems. \emph{Commun. Nonlinear. Sci. Numer. Simul.}, 21(1-3): 89-98 (2015)
\bibitem{ref11} Zitzler, E., Laumanns, M., Thiele, L. SPEA2: Improving the strength Pareto evolutionary algorithm. \emph{TIK-report 103}, (2001)

\end{thebibliography}
\end{document}